\title{Effectively constructible fixed points in Sacchetti's modal logics of provability}
\author{Taishi Kurahashi and Yuya Okawa}
\date{}
\theoremstyle{plain}
\newtheorem{theorem}{Theorem}[section]
\newtheorem{lem}[theorem]{Lemma}
\newtheorem{prop}[theorem]{Proposition}
\newtheorem{cor}[theorem]{Corollary} 
\newtheorem{fact}[theorem]{Fact} 
\newtheorem{prob}[theorem]{Problem} 
\theoremstyle{definition}
\newtheorem{defn}[theorem]{Definition}
\newtheorem{ex}[theorem]{Example}
\theoremstyle{remark}
\newcommand{\PA}{{\sf PA}}
\newcommand{\K}{{\bf K}}
\newcommand{\GL}{{\bf GL}}
\newcommand{\wGL}{{\bf wGL}}
\newcommand{\At}{{\sf Var}}
\newcommand{\dep}{{\sf dep}}
\newcommand{\db}[1]{\ensuremath{[\![#1]\!]} }
\begin{document}
\maketitle

\abstract{
We give a purely syntactical proof of the fixed point theorem for Sacchetti's modal logics $\K + \Box(\Box^n p \to p) \to \Box p$ ($n \geq 2$) of provability. 
From our proof, an effective procedure for constructing fixed points in these logics is obtained. 
We also show the existence of simple fixed points for particular modal formulas.
}

\section{Introduction}\label{intro}

Solovay's arithmetical completeness theorem \cite{Sol76} states that the propositional modal logic $\GL$ is the provability logic of the standard G\"odel provability predicate ${\rm Pr}_{\PA}(x)$ of Peano Arithmetic $\PA$, that is, for any modal formula $A$, $A$ is provable in $\GL$ if and only if $A$ is provable in $\PA$ under any arithmetical interpretation where $\Box$ is interpreted as ${\rm Pr}_{\PA}(x)$. 
From Solovay's theorem, some aspects of metamathematics of $\PA$ may be reflected in $\GL$. 
In fact, metamathematical facts about self-reference are already provable in $\GL$, that is, the fixed point theorem holds for $\GL$. 

A modal formula $A$ is said to be modalized in $p$ if all occurrences of $p$ in $A$ are under the scope of $\Box$. 
We say that a modal formula $F$ is a fixed point of a modal formula $A(p)$ in $\GL$ if $p$ does not appear in $F$, all propositional variables appearing in $F$ are already in $A$ and $\GL \vdash F \leftrightarrow A(F)$. 
The fixed point theorem for $\GL$ states that for any modal formula $A(p)$ which is modalized in $p$, there exists a fixed point $F$ of $A(p)$ in $\GL$ (see also \cite{Lin96,Lin06,Rei90,SV82}).  
The fixed point theorem for $\GL$ was independently proved by de Jongh and Sambin \cite{Sam76} as one of early achievements of the investigations of provability logic. 
Sambin's proof is purely syntactical, and gives an effective procedure for constructing fixed points in $\GL$. 

The fixed point theorem for weaker modal logics were investigated by Sacchetti \cite{Sac01}. 
Sacchetti introduced the modal logics $\wGL_n = \K + \Box(\Box^n p \to p) \to \Box p$ for $n \geq 2$ which are weaker than $\GL$ (here ``w'' stands for ``weak''), and proved the fixed point theorem for these modal logics. 
These modal logics are actually provability logics for some nonstandard provability predicates, that is, Kurahashi \cite{Kur18} proved that for each $n \geq 2$, there exists a $\Sigma_2$ provability predicate such that $\wGL_n$ is sound and complete with respect to the arithmetical interpretation based on the provability predicate. 
Therefore we may say that metamathematical aspects of arithmetic are also reflected in these logics. 
In fact, for example, since the uniqueness of fixed points is proved in $\wGL_n$ (see Sacchetti \cite[Proposition 3.6]{Sac01}), the corresponding arithmetical statement written using such nonstandard provability predicate is also provable in arithmetic.

Sacchetti's proof of the fixed point theorem is based on Smory\'nski's semantical argument \cite{Smo78}, and gives no effective procedure for constructing fixed points in $\wGL_n$. 
Then Sacchetti asked the following question (see the last paragraph of Sacchetti \cite{Sac01}):
 
\begin{quotation}
{\small
Is there a constructive proof of the fixed point theorem for $\K + \Box(\Box^{n-1} p \to p) \to \Box p$? 
}
\end{quotation}

In this paper, we solve Sacchetti's question affirmatively, that is, we provide a purely syntactical proof of the fixed point theorem for $\wGL_n$, and effectively constructible fixed points in $\wGL_n$ are obtained from our proof. 
As a result, in arithmetic, we can effectively construct concrete fixed points of particular formulas of arithmetic written using the nonstandard provability predicates introduced in \cite{Kur18}.

\section{Preliminaries}

In this paper, we assume that the logical symbols in the language of propositional modal logic are $\bot$, $\to$ and $\Box$, and other symbols such as $\top$, $\land$ and $\Diamond$ are defined from these symbols in a usual way. 
For each modal formula $A$, the set of propositional variables contained in $A$ is denoted by $\At(A)$. 
The axioms of the modal logic $\K$ are Boolean tautologies in the language of propositional modal logic and the modal formula $\Box (p \to q) \to (\Box p \to \Box q)$. 
The inference rules of $\K$ are modus ponens, necessitation and substitution. 
For a modal formula $A$, let $\K + A$ denote the logic axiomatized by adding a new axiom $A$ to $\K$. For each natural number $k$, we define the expression $\Box^k A$ inductively as follows: 
$\Box^0 A \equiv A$ and $\Box^{k+1} A \equiv \Box \Box^k A$. 
Then the modal logics $\GL$ and $\wGL_n$ for $n \geq 2$ are defined as follows: 
\begin{enumerate}
	\item $\GL : = \K + \Box(\Box p \to p) \to \Box p$. 
	\item $\wGL_n : = \K + \Box(\Box^n p \to p) \to \Box p$. 
\end{enumerate}

The modal logic $\GL$ is known as the modal logic of provability (see \cite{Boo93,Smo85}). 
We say that a modal formula $A(p)$ is {\it modalized in $p$} if all occurrences of $p$ in $A$ are under the scope of $\Box$. 
The fixed point theorem for $\GL$ was independently proved by de Jongh and Sambin \cite{Sam76}. 

\begin{theorem}[The fixed point theorem for $\GL$]
For any modal formula $A(p)$ which is modalized in $p$, there exists a modal formula $F$ such that $\GL \vdash F \leftrightarrow A(F)$ and $\At(F) \subseteq \At(A) \setminus \{p\}$. 
\end{theorem}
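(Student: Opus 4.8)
The plan is to give the purely syntactic proof of de Jongh and Sambin. It has three ingredients: a substitution lemma that already holds in $\K$, a uniqueness statement that follows from it via Löb's theorem, and the existence construction (Sambin's algorithm), in which Löb's axiom does the essential work and from which the effective procedure is read off.

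I would first prove the substitution lemma: \emph{if $A(p)$ is modalized in $p$, then $\K \vdash \Box(p \leftrightarrow q) \to \bigl(A(p) \leftrightarrow A(q)\bigr)$ for a fresh variable $q$.} Since every occurrence of $p$ in $A$ sits under a $\Box$, write $A$ as a Boolean and modal combination of subformulas $\Box B_1(p), \dots, \Box B_m(p)$; the replacement theorem gives $\K \vdash (p \leftrightarrow q) \to \bigl(B_i(p) \leftrightarrow B_i(q)\bigr)$, then necessitation and normality give $\K \vdash \Box(p \leftrightarrow q) \to \bigl(\Box B_i(p) \leftrightarrow \Box B_i(q)\bigr)$, and closing under the propositional connectives finishes it. As a consequence, if $\GL \vdash F \leftrightarrow A(F)$ and $\GL \vdash G \leftrightarrow A(G)$ with $p \notin \At(F) \cup \At(G)$, then $\GL \vdash \Box(F \leftrightarrow G) \to (F \leftrightarrow G)$ by the lemma, whence $\GL \vdash F \leftrightarrow G$ by Löb's theorem applied to $F \leftrightarrow G$. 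This uniqueness of fixed points is not logically required for the statement, but it is obtained by the same mechanism and guides the construction.

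For existence I would induct on the number $N$ of occurrences of $\Box$ in $A$ whose scope contains $p$. If $N = 0$ then $p \notin \At(A)$ and $F := A$ works. Otherwise write $A(p) \equiv A^*\bigl(\Box B_1(p), \dots, \Box B_m(p)\bigr)$, where the displayed boxes are the outermost ones containing $p$ and $A^*$ is $p$-free; each $B_i$ has strictly fewer $p$-containing boxes than $A$ (indeed $N(A) = m + \sum_i N(B_i)$ with $m \geq 1$). The heart of the argument is Sambin's lemma: working under the standing hypothesis $\Box(p \leftrightarrow A(p))$, one eliminates $p$ from the subformulas $\Box B_i(p)$ — using the induction hypothesis together with the derived transitivity axiom $\GL \vdash \Box r \to \Box\Box r$ to carry the hypothesis under nested boxes, and instances of Löb's axiom to collapse modal layers — obtaining $p$-free formulas $H_i$ with $\At(H_i) \subseteq \At(A) \setminus \{p\}$ and $\GL \vdash \Box(p \leftrightarrow A(p)) \to \Box\bigl(p \leftrightarrow A^*(\Box H_1, \dots, \Box H_m)\bigr)$. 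Setting $F := A^*(\Box H_1, \dots, \Box H_m)$, one then checks, using the substitution lemma and Löb's theorem once more, that $\GL \vdash F \leftrightarrow A(F)$, and $\At(F) \subseteq \At(A) \setminus \{p\}$ holds by construction.

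I expect the main obstacle to be precisely this inductive step. The subformulas $B_i(p)$ need not be modalized in $p$ — for instance $A(p) = \Box p$ has $B_1(p) = p$ — so the induction hypothesis must be phrased not as ``a fixed point exists'' but as a statement about eliminating $p$ from a formula under the assumption $\Box(p \leftrightarrow A(p))$, and one has to control the interaction of the several boxed subformulas as the elimination proceeds; every collapse of a modal layer is an application of an instance of Löb's axiom, which is exactly the point at which $\K$ alone would fail. Once the construction is in place, effectiveness is immediate: extracting the normal form $A^*(\Box B_1, \dots, \Box B_m)$, recursing on the $B_i$, and assembling $F$ are all effective, so the proof yields a terminating algorithm that on input $A(p)$ returns a fixed point $F$.
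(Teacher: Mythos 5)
Your route is Sambin's original induction on the boxed subformulas containing $p$, which is genuinely different from the argument this paper actually supplies. The paper never eliminates $p$ under a standing hypothesis $\Box(p\leftrightarrow A(p))$: it reduces the general case to fixed points of formulas of the form $\Box A(p)$ via Lindstr\"om's decomposition $A(p)\equiv B(\Box C_1(p),\ldots,\Box C_m(p))$ with $B$ box-free (Lemmas \ref{reduction1} and \ref{reduction2}, an iterated simultaneous-fixed-point construction), and then invokes Fact \ref{factGL}, namely $\GL\vdash \Box A(\top)\leftrightarrow \Box A(\Box A(\top))$, so that $\Box A(\top)$ is already a fixed point of $\Box A(p)$. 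That second ingredient makes the delicate elimination step you describe unnecessary for $\GL$; your approach, if completed, buys explicit uniqueness and Sambin's algorithm, but at the cost of the hardest lemma.

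Two concrete problems remain in your attempt. First, the substitution lemma is false in $\K$: already $\K\not\vdash (p\leftrightarrow q)\to(\Box p\leftrightarrow\Box q)$, so your appeal to ``the replacement theorem'' for the $B_i$ breaks whenever $B_i$ contains a box, and $\K\not\vdash \Box(p\leftrightarrow q)\to(\Box\Box p\leftrightarrow\Box\Box q)$ (take a three-world chain $w\to u\to v$ with $p\leftrightarrow q$ holding at $u$ but not at $v$). The correct statement needs transitivity: since $\GL\vdash\Box r\to\Box\Box r$, one gets $\Box(p\leftrightarrow q)\to\Box^k(p\leftrightarrow q)$ for all $k\geq 1$ and hence $\Box(p\leftrightarrow q)\to(A(p)\leftrightarrow A(q))$ for $A$ modalized in $p$; this is the $\GL$-analogue of Propositions \ref{subst1}--\ref{subst4}. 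Second, and more seriously, the existence argument is missing exactly where you locate ``the heart of the argument'': you never formulate the strengthened induction hypothesis that must replace ``a fixed point exists'' (the $B_i$ need not be modalized in $p$), and the production of $p$-free formulas $H_i$ with $\GL\vdash\Box(p\leftrightarrow A(p))\to\Box(p\leftrightarrow A^*(\Box H_1,\ldots,\Box H_m))$ is asserted, not proved. That step is the entire content of Sambin's theorem; as written, your text is an accurate table of contents for his proof rather than a proof. A quicker way to close the gap is to follow the paper: prove Fact \ref{factGL} directly (a two-line argument from $\Box(p\leftrightarrow q)\to(A(\Box p)\leftrightarrow A(\Box q))$ and L\"ob's axiom) and then run the reduction of Lemma \ref{reduction2}.
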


Such a modal formula $F$ is said to be a {\it fixed point of $A(p)$ in $\GL$}. 
Sambin's proof of the fixed point theorem is purely syntactical, and then we can extract an algorithm for constructing fixed points in $\GL$ from his proof. 
Such an algorithm is said to be Sambin's algorithm. 

\begin{theorem}[Sambin \cite{Sam76}]
For any modal formula $A(p)$ which is modalized in $p$, a fixed point of $A(p)$ in $\GL$ is effectively constructible. 
\end{theorem}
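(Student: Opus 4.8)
The plan is to reconstruct de Jongh and Sambin's syntactic proof of the fixed point theorem for $\GL$ in a form where every step is manifestly an effective operation on modal formulas, and then to read off the algorithm. I would begin by isolating two ingredients. The first is the substitution lemma: if $A(p)$ is modalized in $p$, then $\GL \vdash \Box(p \leftrightarrow q) \to (A(p) \leftrightarrow A(q))$; this follows by an easy induction on $A$, where the modalized hypothesis rules out the problematic case of an occurrence of $p$ outside the scope of $\Box$, and the transitivity principle $\GL \vdash \Box r \to \Box\Box r$ handles the steps under a box. The second is that, for every formula $\psi$, one has $\GL \vdash \Box\psi \leftrightarrow \Box(\Box\psi \to \psi)$, the non-trivial direction being an instance of the defining axiom of $\GL$. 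Combined with the substitution lemma, this second fact already yields that any two fixed points of $A(p)$ are $\GL$-provably equivalent — not logically needed for effectivity, but it makes the output canonical and is convenient when partial solutions are recombined.

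For the construction I would argue by induction on a complexity measure of $A(p)$, essentially the number of boxed subformulas of $A$ in which $p$ occurs. If $p$ does not occur in $A$, then $F := A$ works. Otherwise I would single out an ``innermost'' boxed subformula $\Box C(p)$ of $A$ in which $p$ occurs — so that $p$ lies under no further box inside $C$ — apply the de Jongh--Sambin substitution step, which trades this occurrence for truth-constants and fresh variables and thereby produces one or more formulas strictly smaller in the measure, invoke the induction hypothesis on those formulas, and reassemble. The reassembly is checked against $\GL \vdash F \leftrightarrow A(F)$ (and $\At(F) \subseteq \At(A) \setminus \{p\}$) using the substitution lemma and the equivalence $\GL \vdash \Box\psi \leftrightarrow \Box(\Box\psi \to \psi)$. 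Since singling out the occurrence, carrying out the substitution, and invoking the induction hypothesis are each plainly algorithmic, the construction is an algorithm, and the induction on the measure is precisely the proof that it terminates.

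The hard part will be the bookkeeping in the inductive step. First, the complexity measure must be arranged so that the substitution step provably decreases it: this is delicate because a careless replacement can reintroduce $p$ beneath boxes, and because $C(p)$ need not itself be modalized in $p$, so the recursion is not literally on subformulas of $A$ and one must be precise about what drops and why. Second, and most substantially, one must verify that the reassembled formula really is a fixed point, i.e.\ that $\GL \vdash F \leftrightarrow A(F)$; this is where the substitution lemma and the L\"ob-type equivalence $\GL \vdash \Box\psi \leftrightarrow \Box(\Box\psi \to \psi)$ do the genuine work, and where I expect the bulk of the argument — the actual content of ``Sambin's algorithm'' — to lie. Once these two points are settled, the remaining verifications should be routine and the extraction of the algorithm immediate.
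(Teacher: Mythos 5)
Your overall plan---isolate the substitution lemma $\GL \vdash \Box(p \leftrightarrow q) \to (A(p) \leftrightarrow A(q))$ for $A$ modalized in $p$, together with a L\"ob-type identity, and then recurse on the boxed subformulas of $A$ containing $p$---is the right family of argument, and the two lemmas you state are correct. But the proposal has a genuine gap exactly where the content of the theorem lies: the ``de Jongh--Sambin substitution step'' is invoked by name and never defined. You do not say what formula replaces the selected $\Box C(p)$, why the replacement yields formulas that are still modalized in $p$ and strictly smaller in your measure, or how the partial solutions are ``reassembled''; you yourself defer all of this to ``the bulk of the argument.'' For a statement whose entire assertion is that an effective construction exists, a proof that points at the construction without exhibiting it does not establish the statement. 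Moreover, the one concrete fixed-point identity you do supply, $\GL \vdash \Box\psi \leftrightarrow \Box(\Box\psi \to \psi)$, is only the special case $A(p) \equiv (p \to \psi)$; what the reassembly actually needs is the general single-box identity $\GL \vdash \Box C(\top) \leftrightarrow \Box C(\Box C(\top))$ (Fact \ref{factGL} in this paper), and that does not follow from the L\"ob instance by substitution alone---it requires its own argument via the substitution lemma and a principle like Proposition \ref{equiv}.

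A second, more specific worry is your choice of an \emph{innermost} boxed subformula. If $\Box C(p)$ is innermost, the occurrences of $p$ elsewhere in $A$, outside $\Box C$, are still unresolved, so it is unclear what replacing $\Box C(p)$ ``by truth-constants and fresh variables'' buys you or why the measure drops. The construction this paper relies on (and later generalizes to $\wGL_n$) goes the other way: decompose $A(p) \equiv B(\Box C_1(p), \ldots, \Box C_m(p))$ at the \emph{outermost} boxed subformulas containing $p$, with $B$ box-free, so that the fixed-point equation becomes the system $q_i \leftrightarrow \Box C_i(B(q_1, \ldots, q_m))$; solve that system one variable at a time by induction on $m$ (Lemma \ref{reduction1}), using $\Box C(\top)$ as the fixed point in the one-variable case (Fact \ref{factGL}). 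If you want to complete your write-up, I would either adopt that outermost decomposition or state precisely what your innermost substitution produces, prove that the measure decreases, and verify $\GL \vdash F \leftrightarrow A(F)$ for the reassembled $F$.
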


The fixed point theorem is not specific to $\GL$. 
Sacchetti \cite{Sac01} introduced the modal logics $\wGL_n$ and proved the fixed point theorem for these logics. 

\begin{theorem}[Sacchetti's fixed point theorem \cite{Sac01}]
Let $n \geq 2$. 
Then for any modal formula $A(p)$ which is modalized in $p$, there exists a fixed point of $A(p)$ in $\wGL_n$.
\end{theorem}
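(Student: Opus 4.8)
The plan is to reproduce Sambin's purely syntactic proof of the fixed point theorem for $\GL$, systematically replacing each use of L\"ob's principle $\Box(\Box p\to p)\to\Box p$ by a use of Sacchetti's axiom $\Box(\Box^n p\to p)\to\Box p$. First I would carry out the part of the argument that needs nothing beyond $\K$: given $A(p)$ modalized in $p$, write it as $D[\Box C_1(p),\dots,\Box C_m(p)]$, where $D$ does not contain $p$ and the $\Box C_i(p)$ are the maximal subformulas of the form $\Box B$ in which $p$ occurs; pass to the simultaneous system $q_i\leftrightarrow\Box C_i\big(D[\vec q\,]\big)$ in fresh variables $\vec q$; solve it one equation at a time; and set $F:=D[\vec G\,]$ for the resulting solution $\vec G$. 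The recombination of the partial solutions uses only the $\K$-provable substitution principle ``from $\Box(B\leftrightarrow B')$ infer $E(B)\leftrightarrow E(B')$ whenever $E$ is modalized in the substituted variable''. Everything is thereby reduced to the core task: for an arbitrary $C(p,\vec q\,)$, produce a $p$-free $F$ with $\At(F)\subseteq\At(C)\setminus\{p\}$ and $\wGL_n\vdash F\leftrightarrow\Box C(F,\vec q\,)$.

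For the core task, recall that in $\GL$ one takes $F:=\Box C(\top,\vec q\,)$, the verification resting on Sambin's lemma $\GL\vdash\Box\big(p\leftrightarrow\Box C(p,\vec q\,)\big)\to\Box\big(p\leftrightarrow\Box C(\top,\vec q\,)\big)$, whose proof genuinely uses transitivity. Over $\wGL_n$ this choice fails: already for $C(p)=\neg p$ and $n=2$ the $\GL$-fixed point $\Box\bot$ of $\Box\neg p$ does not satisfy $F\leftrightarrow\Box\neg F$ over $\wGL_2$ (indeed $\wGL_2\not\vdash\Box\neg\Box\bot\to\Box\bot$, as a small Kripke countermodel shows). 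Instead I would unfold the defining equation $n$ times before discharging the self-reference: from $\Box\big(p\leftrightarrow\Box C(p,\vec q\,)\big)$ pass to $\Box\big(p\leftrightarrow H_n(p,\vec q\,)\big)$ with $H_n$ an $n$-fold nesting of $\Box C$, reorganize the nested boxes so that the instance $\Box(\Box^n\psi\to\psi)\to\Box\psi$ of Sacchetti's axiom becomes applicable, and read off the fixed point as the explicit $p$-free formula obtained by replacing the innermost $p$ by $\top$ (equivalently, a bounded iterate $\big(\Box C(\,\cdot\,,\vec q\,)\big)^{k}(\top)$ with $k$ depending on $n$ and the modal depth of $C$). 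One then checks $\wGL_n\vdash F\leftrightarrow\Box C(F,\vec q\,)$ directly from $\K$ and that single axiom instance; the variable condition is immediate. As every step is explicit and uniformly bounded, the construction is an algorithm, which is the stronger ``effectively constructible'' form the paper announces.

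The main obstacle is exactly this core step, and more precisely the fact that $\wGL_n$ does not prove the transitivity axiom $\Box p\to\Box\Box p$: nested boxes no longer collapse, so every place in Sambin's proof where $\Box^k$ is silently reduced to $\Box$, where the iterates are seen to stabilize after one step, and where fixed points are shown to be unique, must be re-established from $\K$ plus $\Box(\Box^n p\to p)\to\Box p$ alone, with careful bookkeeping of box-depths. The heart of the matter is to pin down the correct stabilization statement — the precise number of unfoldings, the precise instance of Sacchetti's axiom that closes the argument, and an induction that actually goes through — and to prove it inside $\wGL_n$, not semantically. With that lemma in hand, the uniqueness lemma (any two fixed points of a given $A(p)$ are $\wGL_n$-provably equivalent) comes out by the same $n$-fold-unfolding device in place of L\"ob, and the remaining inductions are routine.
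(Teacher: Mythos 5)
Your reduction to the single boxed case $\Box C(p,\vec q\,)$ is sound and is essentially the same Lindstr\"om-style decomposition the paper uses (its Lemmas \ref{reduction1} and \ref{reduction2}), and you correctly identify that $\Box C(\top,\vec q\,)$ no longer works over $\wGL_n$. But the core step is where the proposal has a genuine gap, and the specific candidate you put forward cannot be repaired as stated. You propose to unfold the equation and take the fixed point to be $\bigl(\Box C(\,\cdot\,,\vec q\,)\bigr)^{k}(\top)$ for a suitable $k$, i.e.\ to substitute $\top$ \emph{uniformly} for $p$ after $k$-fold iteration. The obstruction is that iteration \emph{adds} modal depths: by the paper's Lemma \ref{lem1}, $\dep(C^k,p)$ consists of all $k$-fold sums of elements of $\dep(C,p)$, so if $p$ occurs in $\Box C$ at two depths that are incongruent modulo $n$ (say $[a]_n\neq[b]_n$), then $\dep_n\bigl((\Box C)^k,p\bigr)$ contains both $[ka]_n$ and $[(k-1)a+b]_n$ for every $k$, and hence never collapses to $\{[0]_n\}$ (nor to any singleton). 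Consequently the only situation in which ``unfold and replace the innermost $p$ by $\top$'' can be justified by the Sacchetti axiom in the way you describe is the case where $\dep_n(\Box C,p)$ is already a singleton $\{[i]_n\}$ --- and that is exactly the paper's Proposition \ref{fp1} ($i=0$) and Theorem \ref{thm2} ($i\neq 0$, fixed point $(\Box A)^n(\top)$), which are presented there as special cases, not as the general construction.

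In the general case the paper has to treat occurrences of $p$ at different depth classes \emph{differently}: it repeatedly forms the $0$-instance (replacing only those occurrences of $p$ whose depth is $\equiv 0 \bmod n$ by $\top$) and then runs a ``$k$-shifting substitution sequence'' that substitutes $\Box A$ only at occurrences of prescribed depth classes, so that after each of $n-1$ rounds the set $\dep_n$ shrinks by one residue class (Lemmas \ref{0instance}, \ref{ss}, \ref{shifting}); the resulting fixed point $\Box B_{n-1}(\top)$ is \emph{not} an iterate of $\Box A$ applied to $\top$, as the $\wGL_3$ example in the paper makes explicit. Your own text concedes that ``the precise number of unfoldings, the precise instance of Sacchetti's axiom that closes the argument, and an induction that actually goes through'' are still to be found; that unfound lemma is precisely the content of the paper's Section~4, and the candidate form you commit to rules out the construction that actually works. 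So the proposal correctly locates the difficulty but does not overcome it.
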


Sacchetti's proof is based on Smory\'nski's proof \cite{Smo78} of the fixed point theorem for $\GL$, and does not give an effective construction of fixed points in $\wGL_n$. 
Sacchetti proposed the following problem. 

\begin{prob}[Sacchetti \cite{Sac01}]
Is there a constructive proof of the fixed point theorem of $\wGL_n$ for $n \geq 2$?
\end{prob}

The main purpose of this paper is to give an affirmative answer to this problem. 

We denote by $\db{k} A$ and $\db{k}^+ A$ the formulas $\Box A \land \Box^2 A \land \cdots \land \Box^k A$ and $A \land \db{k} A$, respectively.
For any modal formula $A(p)$, we define the modal formula $(A)^k(p)$ for each $k \in \omega$ recursively as follows: 
\begin{enumerate}
	\item $A^0(p) \equiv p$; 
	\item $A^{k+1}(p) \equiv A(A^k(p))$.
\end{enumerate}

For each occurrence of a propositional variable $p$ in a modal formula $A$, the number of subformulas of the form $\Box B$ of $A$ containing the occurrence is said to be the (modal) depth of the occurrence in $A$. 
Moreover, we define the set $\dep(A, p)$ of (modal) depths of all occurrences of a propositional variable $p$ in a modal formula $A$. 

\begin{defn}
For any modal formula $A$ and any propositional variable $p$, we define the set $\dep(A, p) \subseteq \omega$ recursively as follows: 
\begin{enumerate}
	\item If $A$ is $p$, then $\dep(A, p) = \{0\}$; 
	\item If $A$ is a propositional variable $q \not \equiv p$ or $\bot$, then $\dep(A, p) = \emptyset$;
	\item If $A$ is of the form $B \to C$, then $\dep(A, p) = \dep(B, p) \cup \dep(C, p)$;
	\item If $A$ is of the form $\Box B$, then $\dep(A, p) = \{x + 1 : x \in \dep(B, p)\}$. 
\end{enumerate}
\end{defn}

Moreover, in considering fixed points in $\wGL_n$, the set of all depths of occurrences of $p$ in $A$ modulo $n$ plays an important role. 
For each $x \in \omega$, let $[x]_n : = \{y \in \omega : y$ is congruent to $x$ modulo $n\}$. 

\begin{defn}
For any modal formula $A$ and any propositional variable $p$, define $\dep_n(A, p)$ to be the set $\{[x]_n : x \in \dep(A, p)\}$. 
\end{defn}


\begin{ex}
The depths of occurrences of $p$ from left to right in the modal formula $A \equiv p \land \Box(p \to \Box^2 p)$ are $0$, $1$ and $3$, respectively. 
Also $\dep(A, p) = \{0, 1, 3\}$ and $\dep_3(A, p) = \{[0]_3, [1]_3\}$. 
\end{ex}

We prove some lemmas concerning the sets $\dep(A, p)$ and $\dep_n(A, p)$. 

\begin{lem}\label{lem1}
For any modal formulas $A(p)$ and $B$, 
\[
	\dep(A(B), p) = \{x + y : x \in \dep(A, p)\ \text{and}\ y \in \dep(B, p)\}.
\]
\end{lem}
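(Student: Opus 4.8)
The plan is to argue by induction on the structure of the formula $A$, with $B$ fixed throughout, using the recursive clauses of Definition for $\dep(\cdot, p)$ together with the observation that substitution of $B$ for $p$ commutes with the connectives: $\bot(B) \equiv \bot$, $(C \to D)(B) \equiv C(B) \to D(B)$, and $(\Box C)(B) \equiv \Box(C(B))$. This last fact is just the definition of substitution, and it is what lets the induction hypothesis feed into each inductive clause.

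For the base cases, if $A \equiv p$ then $A(B) \equiv B$ and $\dep(A, p) = \{0\}$, so the right-hand side is $\{0 + y : y \in \dep(B, p)\} = \dep(B, p) = \dep(A(B), p)$. If $A$ is a propositional variable $q \not\equiv p$, or $A \equiv \bot$, then $A(B) \equiv A$, and $\dep(A, p) = \emptyset$ forces both sides to be empty. Here it is worth recording explicitly that the set on the right-hand side is empty whenever either $\dep(A, p)$ or $\dep(B, p)$ is empty — precisely the situation in which $p$ fails to occur in $A(B)$.

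For the inductive step, suppose first $A \equiv C \to D$. Then, by the definition of $\dep$, the commutation of substitution with $\to$, and the induction hypothesis,
\[
	\dep(A(B), p) = \dep(C(B), p) \cup \dep(D(B), p) = \{x + y : x \in \dep(C, p) \cup \dep(D, p),\ y \in \dep(B, p)\},
\]
and $\dep(C, p) \cup \dep(D, p) = \dep(A, p)$, which gives the claim. Next suppose $A \equiv \Box C$. Then, similarly,
\[
	\dep(A(B), p) = \{z + 1 : z \in \dep(C(B), p)\} = \{(x + 1) + y : x \in \dep(C, p),\ y \in \dep(B, p)\},
\]
using the induction hypothesis and the trivial identity $(x + y) + 1 = (x + 1) + y$; since $\{x + 1 : x \in \dep(C, p)\} = \dep(A, p)$, this is exactly $\{x' + y : x' \in \dep(A, p),\ y \in \dep(B, p)\}$, completing the induction.

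I do not anticipate any real obstacle: the argument is a routine structural induction. The only point that needs a moment's care is the bookkeeping in the cases where $p$ does not occur in $A$ (or where $B$ contains no $p$), where one must check that the set-builder expression on the right genuinely collapses to $\emptyset$ so that it matches $\dep(A(B), p)$.
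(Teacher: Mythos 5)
Your proof is correct and follows essentially the same route as the paper: a structural induction on $A$ with the identical case split ($p$; other variable or $\bot$; implication; box) and the same computations in each case. The extra remark about the set-builder expression collapsing to $\emptyset$ when $p$ is absent is fine but not needed beyond what the paper already does implicitly.
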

\begin{proof}
We prove by induction on the construction of $A$. 

If $A$ is $p$, then $A(B) \equiv B$ and $\dep(A, p) = \{0\}$. 
The statement follows obviously. 

If $A$ is a propositional variable $q \not \equiv p$ or $\bot$, then $A(B) \equiv A$ and $\dep(A, p) = \emptyset$. 
Then the statement is trivial. 

If $A$ is of the form $C_0 \to C_1$, and suppose that the statement holds for $C_0$ and $C_1$. 
\begin{align*}
	\dep((C_0 \to C_1)(B), p) & = \dep(C_0(B), p) \cup \dep(C_1(B), p) \\
	& = \bigcup_{i = 0, 1}\{x + y : x \in \dep(C_i, p), y \in \dep(B, p)\}\\
	& = \{x + y : x \in \dep(C_0, p) \cup \dep(C_1, p), y \in \dep(B, p)\}\\
	& = \{x + y : x \in \dep(A, p), y \in \dep(B, p)\}.
\end{align*}

If $A$ is of the form $\Box C$ and suppose that the statement holds for $C$. 
\begin{align*}
	\dep(\Box C(B), p) & = \{x + 1 : x \in \dep(C(B), p)\} \\
	& = \{x + y + 1 : x \in \dep(C, p), y \in \dep(B, p)\}\\
	& = \{x + y : x \in \dep(A, p), y \in \dep(B, p)\}.
\end{align*}
\end{proof}

\begin{lem}\label{lem2}
For any modal formulas $A(p, q)$ and $B$, 
\[
	\dep(A(p, B), p) = \{x + y : x \in \dep(A, q)\ \text{and}\ y \in \dep(B, p)\} \cup \dep(A, p).
\]
\end{lem}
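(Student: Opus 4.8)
The plan is to argue by induction on the construction of $A$, exactly paralleling the proof of Lemma~\ref{lem1}. Here $A(p,B)$ denotes the result of substituting $B$ for $q$ in $A(p,q)$, where $p$ and $q$ are distinct distinguished variables. The underlying intuition is that an occurrence of $p$ in $A(p,B)$ is of one of two kinds: either it already occurs in $A$ itself, in which case its depth is unchanged and hence lies in $\dep(A,p)$; or it lies inside one of the copies of $B$ that replaced an occurrence of $q$, in which case its depth is the depth $x$ of that occurrence of $q$ in $A$ plus its depth $y$ inside $B$. These two kinds of occurrence are precisely what the set $\{x+y : x \in \dep(A,q),\ y \in \dep(B,p)\}$ and the set $\dep(A,p)$ on the right-hand side record. (A direct reduction to Lemma~\ref{lem1} does not quite suffice, since that lemma only tracks the substituted variable.)

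The base cases carry the extra content relative to Lemma~\ref{lem1}. If $A$ is $p$, then $A(p,B) \equiv p$, $\dep(A,q) = \emptyset$ and $\dep(A,p) = \{0\}$, so both sides equal $\{0\}$. If $A$ is $q$, then $A(p,B) \equiv B$, while $\dep(A,q) = \{0\}$ and $\dep(A,p) = \emptyset$; hence the right-hand side is $\{0 + y : y \in \dep(B,p)\} = \dep(B,p)$, which is the left-hand side. If $A$ is a propositional variable $\not\equiv p, q$ or is $\bot$, then $A(p,B) \equiv A$ and the sets $\dep(A(p,B),p)$, $\dep(A,q)$, $\dep(A,p)$ are all empty.

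For the inductive step, when $A \equiv C_0 \to C_1$ we apply the induction hypothesis to $C_0$ and $C_1$ and use the clause $\dep(C_0 \to C_1, r) = \dep(C_0,r) \cup \dep(C_1,r)$ together with the distributivity of union over the set-builder expressions, just as in Lemma~\ref{lem1}. When $A \equiv \Box C$ we apply the induction hypothesis to $C$ and then use $\dep(\Box D, r) = \{x+1 : x \in \dep(D,r)\}$; the only manipulation needed is to rewrite $\{x+y+1 : x \in \dep(C,q),\ y \in \dep(B,p)\}$ as $\{x'+y : x' \in \{x+1 : x \in \dep(C,q)\},\ y \in \dep(B,p)\}$, so that the outer $\Box$ is absorbed into the $\dep(\cdot,q)$-summand, while $\{z+1 : z \in \dep(C,p)\}$ becomes $\dep(\Box C, p)$.

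Since every step is a routine set-theoretic identity, there is no real obstacle. The one point that deserves care is the standing convention that $p$ and $q$ are distinct: it is exactly this that makes the summand $\dep(A,p)$ legitimately survive the substitution, since occurrences of $p$ in $A$ are untouched when $B$ is substituted for $q$. If instead $p \equiv q$, those occurrences would be replaced by $B$ and the stated equality would fail.
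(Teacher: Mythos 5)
Your proof is correct and follows essentially the same route as the paper's: induction on the construction of $A$, with the same three base cases and the same manipulations in the $\to$ and $\Box$ steps. The extra remarks on the intuition behind the two summands and on the necessity of $p \not\equiv q$ are fine but not needed.
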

\begin{proof}
We prove by induction on the construction of $A$. 

If $A$ is $p$, then $A(p, B) \equiv A$, $\dep(A, p) = \{0\}$ and $\dep(A, q) = \emptyset$. 
Hence the statement holds. 

If $A$ is $q$, then $A(p, B) \equiv B$, $\dep(A, p) = \emptyset$, $\dep(A, q) = \{0\}$. 
Then the statement is true. 

If $A$ is $r \not \equiv p, q$ or $\bot$, then $A(p, B) = A$, $\dep(A, p) = \dep(A, q) = \emptyset$. 
This case is also trivial. 

If $A$ is of the form $C_0 \to C_1$, and suppose that the statement holds for $C_0$ and $C_1$. 
\begin{align*}
	& \dep((C_0 \to C_1)(p, B), p)\\
	= & \dep(C_0(p, B), p) \cup \dep(C_1(p, B), p) \\
	= & \bigcup_{i = 0, 1} \left(\{x + y : x \in \dep(C_i, q), y \in \dep(B, p)\} \cup \dep(C_i, p) \right)\\
	= & \{x + y : x \in \dep(A, q), y \in \dep(B, p)\} \cup \dep(A, p).
\end{align*}

If $A$ is of the form $\Box C$ and suppose that the statement holds for $C$. 
\begin{align*}
	& \dep(\Box C(p, B), p)\\
	= & \{x + 1 : x \in \dep(C(p, B), p)\} \\
	= & \{x + y + 1 : x \in \dep(C, q), y \in \dep(B, p)\} \cup \{x + 1 : x \in \dep(C, p)\}\\
	= & \{x + y : x \in \dep(A, q), y \in \dep(B, p)\} \cup \dep(A, p).
\end{align*}
\end{proof}

\begin{lem}\label{lem3}
For any modal formula $A(p)$, 
\[
	\dep(\Box A, p) = \dep(A(\Box p), p).
\]
\end{lem}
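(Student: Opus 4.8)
The plan is to avoid any fresh induction and instead derive the identity purely from the recursive clause for $\Box$ in the definition of $\dep$ together with Lemma~\ref{lem1}. The key observation is that both sides will reduce to the same set, namely $\{x+1 : x \in \dep(A,p)\}$.

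First I would unfold the left-hand side. By clause~(4) of the definition of $\dep$,
\[
\dep(\Box A, p) = \{x+1 : x \in \dep(A,p)\}.
\]
Next I would treat the right-hand side as an instance of substitution of a single formula for $p$, so that Lemma~\ref{lem1} applies with $B \equiv \Box p$. That lemma gives
\[
\dep(A(\Box p), p) = \{x+y : x \in \dep(A,p)\ \text{and}\ y \in \dep(\Box p, p)\}.
\]
It then remains to compute $\dep(\Box p, p)$: by clause~(4) it equals $\{y+1 : y \in \dep(p,p)\}$, and by clause~(1) $\dep(p,p) = \{0\}$, so $\dep(\Box p, p) = \{1\}$. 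Substituting this in collapses the right-hand side to $\{x+1 : x \in \dep(A,p)\}$, matching the left-hand side.

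There is essentially no obstacle here; the only point requiring a little care is making sure that the expression $A(\Box p)$ is genuinely the substitution instance to which Lemma~\ref{lem1} refers (i.e.\ $p$ is the distinguished variable being replaced, and $\Box p$ is the replacing formula, even though it again contains $p$), which is exactly the generality in which Lemma~\ref{lem1} was stated. Given that, the two displayed computations meet and the proof is complete.
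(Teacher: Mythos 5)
Your argument is correct and is essentially identical to the paper's own proof: both apply Lemma~\ref{lem1} with $B \equiv \Box p$, compute $\dep(\Box p, p) = \{1\}$, and observe that both sides reduce to $\{x+1 : x \in \dep(A,p)\}$ via clause~(4) of the definition. No further comment is needed.
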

\begin{proof}
By Lemma \ref{lem1}, 
\begin{align*}
	\dep(A(\Box p), p) & = \{x + y : x \in \dep(A, p)\ \text{and}\ y \in \dep(\Box p, p)\}\\
	& = \{x + 1 : x \in \dep(A, p)\}\\
	& = \dep(\Box A, p). 
\end{align*}
\end{proof}

\begin{lem}\label{lem4}
For any $i, k \in \omega$, if $\dep_n(A, p) = \{[i]_n\}$, then $\dep_n(A^k(p), p) = \{[ki]_n\}$. 
In particular, $\dep_n(A^n(p), p) = \{[0]_n\}$. 
\end{lem}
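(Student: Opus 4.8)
The plan is to prove the main assertion by induction on $k$, after which the ``in particular'' clause follows simply by instantiating $k = n$. The base case $k = 0$ is immediate: $A^0(p) \equiv p$, so $\dep(A^0(p), p) = \{0\}$ and therefore $\dep_n(A^0(p), p) = \{[0]_n\} = \{[0 \cdot i]_n\}$.

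For the inductive step I would assume $\dep_n(A^k(p), p) = \{[ki]_n\}$. Since $A^{k+1}(p) \equiv A(A^k(p))$, Lemma~\ref{lem1} yields
\[
	\dep(A^{k+1}(p), p) = \{x + y : x \in \dep(A, p)\ \text{and}\ y \in \dep(A^k(p), p)\}.
\]
The hypothesis $\dep_n(A, p) = \{[i]_n\}$ says both that $\dep(A, p) \neq \emptyset$ and that $x \equiv i \pmod{n}$ for every $x \in \dep(A, p)$; similarly the induction hypothesis gives $\dep(A^k(p), p) \neq \emptyset$ and $y \equiv ki \pmod{n}$ for every $y \in \dep(A^k(p), p)$. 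Hence the set displayed above is nonempty, and every one of its elements $x + y$ satisfies $x + y \equiv (k+1)i \pmod{n}$, i.e. $[x + y]_n = [(k+1)i]_n$. Consequently $\dep_n(A^{k+1}(p), p) = \{[(k+1)i]_n\}$, which closes the induction. Taking $k = n$ then gives $\dep_n(A^n(p), p) = \{[ni]_n\} = \{[0]_n\}$, as $ni \equiv 0 \pmod{n}$.

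The only point requiring a little care — and it is a mild one — is the bookkeeping about nonemptiness: one must observe that a singleton value of $\dep_n(\cdot, p)$ forces the underlying set $\dep(\cdot, p)$ to be nonempty, so that the sumset produced by Lemma~\ref{lem1} is itself nonempty and its image modulo $n$ is a genuine singleton rather than $\emptyset$. Beyond that, the argument is just a routine congruence computation, and I do not expect any real obstacle.
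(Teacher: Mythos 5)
Your proof is correct and follows essentially the same route as the paper: induction on $k$ with the sumset description of $\dep(A(A^k(p)),p)$ and a congruence computation. Your explicit attention to nonemptiness is a welcome touch the paper glosses over, and you correctly invoke Lemma~\ref{lem1} where the paper's proof cites Lemma~\ref{lem2} (apparently a slip, since the substitution involved is the single-variable one).
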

\begin{proof}
We prove by induction on $k$. 
For $k = 0$, since $A^0(p) \equiv p$ and $\dep(p, p) = \{0\}$, $\dep_n(A^0(p), p) = \{[0]_n\}$.  

Suppose $\dep_n(A^k(p), p) = \{[ki]_n\}$. 
Since $A^{k+1}(p) \equiv A(A^k(p))$, we obtain 
\[
	\dep_n(A^{k+1}(p), p) = \{[x]_n + [y]_n : x \in \dep(A, p)\ \text{and}\ y \in \dep(A^k, p)\}. 
\]
by Lemma \ref{lem2}. 
Since 	$\dep_n(A, p) = \{[i]_n\}$ and $\dep_n(A^k(p), p) = \{[ki]_n\}$, we have $\dep_n(A^{k+1}(p), p) = \{[i]_n + [ki]_n\} = \{[(k+1)i]_n\}$. 
\end{proof}

\section{Basic properties of $\wGL_n$}

In this section, we prove several basic properties of $\wGL_n$ used in our proof of the fixed point theorem of $\wGL_n$. 

\begin{prop}[See \cite{Sac01}]\label{trans}
For any modal formula $A$, 
\[
	\wGL_n \vdash \Box A \to \Box^{n+1} A.
\] 
\end{prop}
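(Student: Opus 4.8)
The plan is to adapt the classical syntactic proof that $\GL \vdash \Box A \to \Box\Box A$ (transitivity of $\Box$ in $\GL$), which proceeds by feeding a cleverly chosen formula into the L\"ob axiom; here the role of the L\"ob axiom is played by the characteristic axiom $\Box(\Box^n p \to p) \to \Box p$ of $\wGL_n$. Concretely, I would set $q :\equiv A \wedge \Box^n A$ and substitute $p := q$ in that axiom, obtaining
\[
	\wGL_n \vdash \Box(\Box^n q \to q) \to \Box q .
\]
It then suffices to establish two facts in plain $\K$: (a) $\K \vdash \Box A \to \Box(\Box^n q \to q)$, and (b) $\K \vdash \Box q \to \Box^{n+1} A$. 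Chaining (a), the displayed instance, and (b) immediately yields $\wGL_n \vdash \Box A \to \Box^{n+1}A$.

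For (b), note that $q \to \Box^n A$ is a propositional tautology, so by necessitation and the $\K$-axiom $\K \vdash \Box q \to \Box\Box^n A$, which is exactly $\K \vdash \Box q \to \Box^{n+1}A$. For (a), I would first record the routine $\K$-monotonicity of $\Box^n$ (provable by induction on $n$ from the $\K$-axiom and necessitation): whenever $\K \vdash X \to Y$, also $\K \vdash \Box^n X \to \Box^n Y$. Applying this to the tautology $q \to A$ (recall $q \equiv A \wedge \Box^n A$) gives $\K \vdash \Box^n q \to \Box^n A$, and since trivially $A \wedge \Box^n q \to A$, we get $\K \vdash A \wedge \Box^n q \to (A \wedge \Box^n A)$, i.e. $\K \vdash A \to (\Box^n q \to q)$. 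Necessitating this theorem and then distributing $\Box$ over the implication via the $\K$-axiom yields $\K \vdash \Box A \to \Box(\Box^n q \to q)$, which is (a).

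The only real decision in the whole argument is the choice $q \equiv A \wedge \Box^n A$ — for $n = 1$ this is precisely the $A \wedge \Box A$ used in the $\GL$ proof, so the construction is the natural generalization, and everything else is bookkeeping inside $\K$. As a consistency check, one can observe that $\Box A \to \Box^{n+1}A$ semantically expresses the condition $R^{n+1} \subseteq R$ on Kripke frames; but confirming the theorem that way would invoke a completeness theorem for $\wGL_n$, whereas the argument sketched above is entirely elementary and proof-theoretic, which is what is wanted here.
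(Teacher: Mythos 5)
Your proof is correct and is essentially the paper's own argument: both substitute $p := A \land \Box^n A$ into the axiom $\Box(\Box^n p \to p) \to \Box p$, derive $\K \vdash A \to (\Box^n(A \land \Box^n A) \to (A \land \Box^n A))$ by routine $\K$-reasoning, necessitate, and extract $\Box^{n+1}A$ from $\Box(A \land \Box^n A)$. The only difference is presentational (you phrase the $\K$-bookkeeping via monotonicity of $\Box^n$, the paper via a propositional tautology), so there is nothing to change.
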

\begin{proof}
Since $A \to ((\Box^n A \land \Box^{2n} A) \to (A \land \Box^n A))$ is a tautology, 
\[
	\K \vdash A \to (\Box^n (A \land \Box^n A) \to (A \land \Box^n A)). 
\]
Then 
\[
	\K \vdash \Box A \to \Box (\Box^n (A \land \Box^n A) \to (A \land \Box^n A)). 
\]
By the axiom $\Box (\Box^n (A \land \Box^n A) \to (A \land \Box^n A)) \to \Box (A \land \Box^n A)$ of $\wGL_n$, we obtain 
\[
	\wGL_n \vdash \Box A \to \Box (A \land \Box^n A). 
\]
We conclude $\wGL_n \vdash \Box A \to \Box^{n+1} A$. 
\end{proof}

The following proposition is a variation of the result known as the substitution lemma that holds for $\GL$ (see Boolos \cite{Boo93}).

\begin{prop}\label{subst1}
For any modal formulas $A_j, B_j$ ($1 \leq j \leq m$) and $C(p_1, \ldots, p_m)$, 
\[
	\wGL_n \vdash \db{n}^+ \bigwedge_{j=1}^m (A_j \leftrightarrow B_j) \to (C(A_1, \ldots, A_m) \leftrightarrow C(B_1, \ldots, B_m)).
\]
\end{prop}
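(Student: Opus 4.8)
The plan is to prove Proposition~\ref{subst1} by induction on the construction of the formula $C(p_1,\dots,p_m)$, with the substitution lemma statement for $\wGL_n$ playing the same role that the analogous lemma plays in Sambin's treatment of $\GL$. The crucial observation is that $\db{n}^+ \bigwedge_j(A_j\leftrightarrow B_j)$ asserts the equivalences hold \emph{together with} their first $n$ boxed iterates, and this ``depth-buffer'' of width $n$ is exactly what is needed to push the induction through the box case, because in $\wGL_n$ Proposition~\ref{trans} lets us recover boxed iterates of all depths from the first few.

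First I would set $D \equiv \db{n}^+ \bigwedge_{j=1}^m (A_j \leftrightarrow B_j)$ and prove the statement by formula induction on $C$. The atomic cases are immediate: if $C$ is $\bot$ or a variable not among $p_1,\dots,p_m$, both sides of the biconditional are identical; if $C$ is $p_j$, then $C(A_1,\dots,A_m)\leftrightarrow C(B_1,\dots,B_m)$ is literally a conjunct of $D$ (indeed already of $\bigwedge_j(A_j\leftrightarrow B_j)$). The implication case $C \equiv C_0 \to C_1$ is pure propositional logic from the two induction hypotheses, since from $D\to(C_0(\vec A)\leftrightarrow C_0(\vec B))$ and $D\to(C_1(\vec A)\leftrightarrow C_1(\vec B))$ one gets $D\to\big((C_0\to C_1)(\vec A)\leftrightarrow(C_0\to C_1)(\vec B)\big)$ tautologically.

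The box case $C \equiv \Box C_0$ is where the real work and the main obstacle lie. By the induction hypothesis, $\wGL_n\vdash D\to(C_0(\vec A)\leftrightarrow C_0(\vec B))$; applying necessitation gives $\wGL_n\vdash \Box D\to\Box(C_0(\vec A)\leftrightarrow C_0(\vec B))$, and by normality $\wGL_n\vdash\Box D\to(\Box C_0(\vec A)\leftrightarrow\Box C_0(\vec B))$, which is $\Box D\to(C(\vec A)\leftrightarrow C(\vec B))$. The task is then to derive $\Box D$ from $D$ itself inside $\wGL_n$. Now $D$ already contains $\bigwedge_j(A_j\leftrightarrow B_j)$ and $\Box^i\bigwedge_j(A_j\leftrightarrow B_j)$ for $1\le i\le n$ as conjuncts, so $\Box D$ unfolds to a conjunction of $\Box^i\bigwedge_j(A_j\leftrightarrow B_j)$ for $1\le i\le n+1$; all of these except the last are already conjuncts of $D$, and the missing $\Box^{n+1}\bigwedge_j(A_j\leftrightarrow B_j)$ is obtained from the conjunct $\Box\bigwedge_j(A_j\leftrightarrow B_j)$ of $D$ via Proposition~\ref{trans}. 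So $\wGL_n\vdash D\to\Box D$, and composing with the normality step above yields $\wGL_n\vdash D\to(C(\vec A)\leftrightarrow C(\vec B))$.

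The subtlety to be careful about — and what I expect to be the main obstacle to writing this cleanly rather than a deep difficulty — is the bookkeeping around distributing $\Box$ over the finite conjunction $D$ (which requires iterated use of $\Box(X\wedge Y)\leftrightarrow(\Box X\wedge\Box Y)$, valid already in $\K$) and making sure the indices line up so that $\db{n}^+$ supplies precisely the conjuncts $\Box^0,\dots,\Box^n$ while Proposition~\ref{trans} supplies $\Box^{n+1}$ from $\Box^1$. Once $\wGL_n\vdash D\to\Box D$ is established as a standalone fact, the induction closes uniformly, and no genuinely new principle beyond Proposition~\ref{trans} and the normality of $\wGL_n$ is required.
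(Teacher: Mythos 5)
Your proposal is correct and follows essentially the same route as the paper: induction on $C$, with the box case handled by necessitation plus normality and a single application of Proposition~\ref{trans} to recover $\Box^{n+1}\bigwedge_j(A_j\leftrightarrow B_j)$ from $\Box\bigwedge_j(A_j\leftrightarrow B_j)$. The only cosmetic difference is that you package the key step as $D\to\Box D$, while the paper states it as $\db{n}\bigwedge_j(A_j\leftrightarrow B_j)\to\db{n+1}\bigwedge_j(A_j\leftrightarrow B_j)$ — the same fact, though the paper's phrasing makes the slightly stronger Proposition~\ref{subst2} fall out for free.
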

\begin{proof}
We prove by induction on the construction of $C$. 
We only prove the case that $C$ is of the form $\Box D$. 
By induction hypothesis, we have 
\[
	\wGL_n \vdash \db{n}^+ \bigwedge_{j=1}^m (A_j \leftrightarrow B_j) \to (D(A_1, \ldots, A_m) \leftrightarrow D(B_1, \ldots, B_m)).
\]
Then
\[
	\wGL_n \vdash \db{n+1} \bigwedge_{j=1}^m (A_j \leftrightarrow B_j) \to (\Box D(A_1, \ldots, A_m) \leftrightarrow \Box D(B_1, \ldots, B_m)).
\]
By Proposition \ref{trans}, 
\[
	\wGL_n \vdash \db{n} \bigwedge_{j=1}^m (A_j \leftrightarrow B_j) \to (\Box D(A_1, \ldots, A_m) \leftrightarrow \Box D(B_1, \ldots, B_m)).
\]
In particular, we conclude
\[
	\wGL_n \vdash \db{n}^+ \bigwedge_{j=1}^m (A_j \leftrightarrow B_j) \to (C(A_1, \ldots, A_m) \leftrightarrow C(B_1, \ldots, B_m)).
\]
\end{proof}

From our proof of Proposition \ref{subst1}, we also obtain the following proposition. 

\begin{prop}\label{subst2}
For any modal formulas $A_j, B_j$ ($1 \leq j \leq m$) and $C(p_1, \ldots, p_m)$, 
\[
	\wGL_n \vdash \db{n} \bigwedge_{j=1}^m (A_j \leftrightarrow B_j) \to (\Box C(A_1, \ldots, A_m) \leftrightarrow \Box C(B_1, \ldots, B_m)).
\]
\end{prop}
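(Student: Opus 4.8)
The plan is to observe that Proposition \ref{subst2} is essentially the inductive step for the case $C \equiv \Box D$ in the proof of Proposition \ref{subst1}, now carried out once with $D$ replaced by the arbitrary formula $C$ and with Proposition \ref{subst1} itself (rather than an induction hypothesis) supplying the premise. So no new induction is needed: I would simply extract and re-run that one step.

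Concretely, I would first apply Proposition \ref{subst1} directly to the formula $C(p_1, \ldots, p_m)$, obtaining
\[
	\wGL_n \vdash \db{n}^+ \bigwedge_{j=1}^m (A_j \leftrightarrow B_j) \to (C(A_1, \ldots, A_m) \leftrightarrow C(B_1, \ldots, B_m)).
\]
Next I would apply necessitation and the normality of $\Box$ in $\K$ (distribution over $\to$, and distribution over $\land$ to see that $\Box \db{n}^+ X$ is $\K$-equivalent to $\db{n+1} X$), which yields
\[
	\wGL_n \vdash \db{n+1} \bigwedge_{j=1}^m (A_j \leftrightarrow B_j) \to (\Box C(A_1, \ldots, A_m) \leftrightarrow \Box C(B_1, \ldots, B_m)).
\]
Finally, Proposition \ref{trans} gives $\wGL_n \vdash \Box X \to \Box^{n+1} X$, hence $\wGL_n \vdash \db{n} X \to \db{n+1} X$; taking $X := \bigwedge_{j=1}^m (A_j \leftrightarrow B_j)$ and composing produces the desired
\[
	\wGL_n \vdash \db{n} \bigwedge_{j=1}^m (A_j \leftrightarrow B_j) \to (\Box C(A_1, \ldots, A_m) \leftrightarrow \Box C(B_1, \ldots, B_m)).
\]

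There is no real obstacle here; the only points needing a little care are the bookkeeping that $\Box \db{n}^+ X$ and $\db{n+1} X$ are provably equivalent already in $\K$, and that the passage from $\db{n+1}$ back down to $\db{n}$ in the antecedent is exactly where Proposition \ref{trans} is invoked, just as in the $\Box D$ case of the proof of Proposition \ref{subst1}.
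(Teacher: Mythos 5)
Your proposal is correct and matches the paper's intent exactly: the paper gives no separate proof of Proposition \ref{subst2}, remarking only that it falls out of the $\Box D$ case in the proof of Proposition \ref{subst1}, and your argument (apply Proposition \ref{subst1} to $C$, necessitate and distribute to pass from $\db{n}^+$ to $\db{n+1}$, then use Proposition \ref{trans} to weaken the antecedent to $\db{n}$) is precisely that step re-run with $C$ in place of $D$.
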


The following proposition says that a L\"ob-like rule holds in $\wGL_n$. 

\begin{prop}\label{fLob}
For any modal formula $A$, 
\[
\wGL_{n} \vdash \db{n}(\db{n}A \to A) \to \db{n}A.
\] 
\end{prop}

\begin{proof}
First, we prove by induction on $k$ that for any $k$ $(0 \leq k \leq n-1)$, 
\[
\wGL_{n} \vdash \db{n}(\db{n}A \to A) \to ((\Box^{k+1}A \land \cdots \land \Box^{n} A) \to \db{n}A). 
\]

If $k=0$, this is trivial. 

Assume that the statement holds for $k$, that is,
\[
\wGL_{n} \vdash \db{n}(\db{n}A \to A) \to ((\Box^{k+1}A \land \cdots \land \Box^{n} A) \to \db{n}A). 
\]
Then
\[
\wGL_{n} \vdash \db{n}(\db{n}A \to A) \to \Box((\Box^{k+1}A \land \cdots \land \Box^{n} A) \to \db{n}A). 
\]
By $\db{n}(\db{n}A \to A)$, we obtain
\[
\wGL_{n} \vdash \db{n}(\db{n}A \to A) \to \Box((\Box^{k+1}A \land \cdots \land \Box^{n} A) \to A). 
\]
Therefore 
\[
\wGL_{n} \vdash \db{n}(\db{n}A \to A) \to ((\Box^{k+2}A \land \cdots \land \Box^{n} A) \to \Box(\Box^{n} A \to A)). 
\]
By the axiom of $\wGL_{n}$, we have 
\begin{eqnarray}\label{eq1}
\wGL_{n} \vdash \db{n}(\db{n}A \to A) \to ((\Box^{k+2}A \land \cdots \land \Box^{n} A) \to \Box A). 
\end{eqnarray}
Whereas, by our assumption, 
\[
\wGL_{n} \vdash \db{n}(\db{n}A \to A) \to ((\Box^{k+2}A \land \cdots \land \Box^{n+1} A) \to (\Box^{2} A \land \cdots \land \Box^{n} A)). 
\]
Hence, we have
\[
\wGL_{n} \vdash \db{n}(\db{n}A \to A) \to ((\Box A \land \Box^{k+2}A \land \cdots \land \Box^{n} A) \to (\Box^{2} A \land \cdots \land \Box^{n} A)). 
\]
By combining this with (\ref{eq1}), we obtain
\[
\wGL_{n} \vdash \db{n}(\db{n}A \to A) \to ((\Box^{k+2}A \land \cdots \land \Box^{n} A) \to \db{n}A). 
\]
This means that the statement holds for $k + 1$.

For $k = n-1$, we have $\wGL_{n} \vdash \db{n}(\db{n}A \to A) \to (\Box^{n} A \to \db{n}A)$ and hence $\wGL_{n} \vdash \db{n}(\db{n}A \to A) \to \Box(\Box^{n} A \to \db{n}A)$. 
Since $\wGL_{n} \vdash \db{n}(\db{n}A \to A) \to \Box(\db{n}A \to A)$, we obtain $\wGL_{n} \vdash \db{n}(\db{n}A \to A) \to \Box(\Box^{n}A \to A)$. 
By the axiom of $\wGL_{n}$, we have $\wGL_{n} \vdash \db{n}(\db{n}A \to A) \to \Box A$. 
Therefore, we obtain $\wGL_{n} \vdash \db{n}(\db{n}A \to A) \to \db{n} A$. 
\end{proof}

From Proposition \ref{fLob}, we obtain the following corollary. 

\begin{cor}\label{Lob}
For any modal formula $A$, if $\wGL_{n} \vdash \db{n}A \to A$, then $\wGL_{n} \vdash A$. 
\end{cor}
\begin{prop}\label{equiv}
For any modal formulas $A$ and $B$, if $\wGL_n \vdash \Box^n A \to (A \leftrightarrow B)$, then $\wGL_n \vdash \Box A \leftrightarrow \Box B$. 
\end{prop}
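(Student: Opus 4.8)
The plan is to establish the two implications $\wGL_n \vdash \Box A \to \Box B$ and $\wGL_n \vdash \Box B \to \Box A$ separately, using the fact that the hypothesis $\wGL_n \vdash \Box^n A \to (A \leftrightarrow B)$ splits into $\wGL_n \vdash \Box^n A \to (A \to B)$ and $\wGL_n \vdash \Box^n A \to (B \to A)$. The two halves are handled asymmetrically: one leans on Proposition \ref{trans}, the other directly on the characteristic axiom of $\wGL_n$.

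For the direction $\wGL_n \vdash \Box A \to \Box B$, I would start from $\wGL_n \vdash \Box^n A \to (A \to B)$; necessitation together with the normality axiom of $\K$ yields $\wGL_n \vdash \Box^{n+1} A \to (\Box A \to \Box B)$. Since Proposition \ref{trans} gives $\wGL_n \vdash \Box A \to \Box^{n+1} A$, chaining these two facts produces $\wGL_n \vdash \Box A \to \Box B$. For the direction $\wGL_n \vdash \Box B \to \Box A$, I would rewrite $\wGL_n \vdash \Box^n A \to (B \to A)$ propositionally as $\wGL_n \vdash B \to (\Box^n A \to A)$; necessitation and normality then give $\wGL_n \vdash \Box B \to \Box(\Box^n A \to A)$, and the axiom instance $\Box(\Box^n A \to A) \to \Box A$ of $\wGL_n$ yields $\wGL_n \vdash \Box B \to \Box A$. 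Combining the two implications gives $\wGL_n \vdash \Box A \leftrightarrow \Box B$.

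There is no serious obstacle here; the only point worth flagging is that the argument cannot be symmetrized. The $\Box A \to \Box B$ direction genuinely requires the transitivity-like Proposition \ref{trans} to absorb the extra boxes coming from necessitation of the $n$-fold premise $\Box^n A$, whereas the reverse direction instead exploits the L\"ob-style axiom of $\wGL_n$ directly and needs no appeal to Proposition \ref{trans}.
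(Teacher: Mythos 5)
Your proposal is correct and follows essentially the same route as the paper: the forward direction permutes the antecedents to $A \to (\Box^n A \to B)$ versus your $\Box^n A \to (A \to B)$, but both box the implication and discharge $\Box^{n+1}A$ via Proposition \ref{trans}, and the reverse direction is identical, passing through $\Box B \to \Box(\Box^n A \to A)$ and the characteristic axiom. Your closing remark about the unavoidable asymmetry of the two halves matches the structure of the paper's argument as well.
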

\begin{proof}
Suppose $\wGL_n \vdash \Box^n A \to (A \leftrightarrow B)$. 
Then $\wGL_n \vdash A \to (\Box^n A \to B)$, and hence $\wGL_n \vdash \Box A \to (\Box^{n+1} A \to \Box B)$. 
Since $\wGL_n \vdash \Box A \to \Box^{n+1} A$ by Proposition \ref{trans}, $\wGL_n \vdash \Box A \to \Box B$. 

On the other hand, $\wGL_n \vdash B \to (\Box^n A \to A)$ by the supposition. 
Then  $\wGL_n \vdash \Box B \to \Box (\Box^n A \to A)$. 
Since $\wGL_n \vdash \Box (\Box^n A \to A) \to \Box A$, we obtain $\wGL_n \vdash \Box B \to \Box A$. 
\end{proof}

We can refine Proposition \ref{subst1} and Proposition \ref{subst2} by considering the sets $\dep_n(C, p)$. 

\begin{prop}\label{subst3}
Let $A, B$ and $C(p)$ be any modal formulas. 
\begin{enumerate}
	\item If $\dep_n(C, p) = \{[0]_n\}$, then 
\[
	\wGL_n \vdash (A \leftrightarrow B) \land \Box^n(A \leftrightarrow B) \to (C(A) \leftrightarrow C(B)).
\]
	\item If $\dep_n(C, p) = \{[i]_n\}$ for $0 < i < n$, then
\[
	\wGL_n \vdash \Box^i(A \leftrightarrow B) \to (C(A) \leftrightarrow C(B)).
\]
\end{enumerate}
\end{prop}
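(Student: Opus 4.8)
The plan is to prove the two statements \emph{simultaneously} by induction on the construction of $C$, after weakening the hypothesis to $\dep_n(C, p) \subseteq \{[i]_n\}$; this is harmless, since if $\dep_n(C, p) = \emptyset$ then $p \notin \At(C)$, so $C(A) \equiv C \equiv C(B)$ and both implications are propositional tautologies, provable already in $\K$. The base cases are then immediate: if $C$ is $p$, then necessarily $i \equiv 0 \pmod{n}$ and $C(A) \leftrightarrow C(B)$ is literally $A \leftrightarrow B$; if $C$ is $\bot$ or a variable $q \not\equiv p$, then $p \notin \At(C)$ and we are in the trivial case just described.

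For the step $C \equiv C_0 \to C_1$: by clause~(3) of the definition of $\dep$, and hence of $\dep_n$, we have $\dep_n(C, p) = \dep_n(C_0, p) \cup \dep_n(C_1, p)$, so each $\dep_n(C_j, p) \subseteq \{[i]_n\}$ and the induction hypothesis applies to $C_0$ and $C_1$ with the \emph{same} index $i$. Since the two resulting implications share the same antecedent --- either $\Box^i(A \leftrightarrow B)$, or $(A \leftrightarrow B) \land \Box^n(A \leftrightarrow B)$ --- the implication for $C$ follows by propositional logic.

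The only delicate step is $C \equiv \Box D$, where all depths of $p$ drop by one and we must follow this shift modulo $n$. By clause~(4) of the definition, $\dep_n(D, p) \subseteq \{[j]_n\}$, where $j \in \{0, \dots, n-1\}$ is the residue of $i - 1$ modulo $n$, so $j = n-1$ when $i = 0$. I would split into three cases. \textbf{(a)} If $1 < i < n$, then $0 < j = i-1 < n$, so the induction hypothesis (part~2 for $D$) gives $\wGL_n \vdash \Box^{i-1}(A \leftrightarrow B) \to (D(A) \leftrightarrow D(B))$; applying necessitation and distributing $\Box$ over $\to$ yields $\wGL_n \vdash \Box^i(A \leftrightarrow B) \to (\Box D(A) \leftrightarrow \Box D(B))$, which is part~2 for $C$. \textbf{(b)} If $i = 0$, then $j = n-1$ with $0 < n-1 < n$ (as $n \ge 2$), so by part~2 for $D$ and necessitation we obtain $\wGL_n \vdash \Box^n(A \leftrightarrow B) \to (\Box D(A) \leftrightarrow \Box D(B))$, which already implies the conclusion of part~1 for $C$. \textbf{(c)} If $i = 1$, then $j = 0$; here part~1 for $D$ gives $\wGL_n \vdash (A \leftrightarrow B) \land \Box^n(A \leftrightarrow B) \to (D(A) \leftrightarrow D(B))$, and necessitation and distribution give $\wGL_n \vdash \Box(A \leftrightarrow B) \land \Box^{n+1}(A \leftrightarrow B) \to (\Box D(A) \leftrightarrow \Box D(B))$. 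Now Proposition~\ref{trans}, i.e.\ $\wGL_n \vdash \Box(A \leftrightarrow B) \to \Box^{n+1}(A \leftrightarrow B)$, lets us delete the conjunct $\Box^{n+1}(A \leftrightarrow B)$ from the antecedent, which yields part~2 for $C$ with index~$1$.

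I expect case~(c) to be the crux. It is the single point where a genuine theorem of $\wGL_n$ (Proposition~\ref{trans}), rather than pure $\K$-reasoning, is invoked, and it is precisely the reason the two parts must be proved together: a depth-$0$ occurrence of $p$ lying inside one box produces, after necessitation, the antecedent $\Box(A \leftrightarrow B) \land \Box^{n+1}(A \leftrightarrow B)$, and Proposition~\ref{trans} is exactly what collapses this back to the clean antecedent $\Box^1(A \leftrightarrow B)$ demanded by part~2. Everything else is routine bookkeeping with the definitions of $\dep$ and $\dep_n$.
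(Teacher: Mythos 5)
Your proof is correct and follows essentially the same route as the paper's: a simultaneous induction on $C$ for both clauses, with the trivial case $\dep(C,p)=\emptyset$ handled separately, and with Proposition \ref{trans} invoked exactly at the step where a boxed subformula $D$ has a depth-$0$ occurrence of $p$ (your case (c), the paper's subcase $j=0$). The only differences are presentational — you split the $\Box D$ case into three subcases where the paper uses two, and you make the weakening to $\dep_n(C,p)\subseteq\{[i]_n\}$ explicit rather than implicit.
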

\begin{proof}
First, note that if $\dep(C, p) = \emptyset$, then 
\[
	\wGL_n \vdash (A \leftrightarrow B) \to (C(A) \leftrightarrow C(B))
\]
trivially holds. 

We prove clauses 1 and 2 simultaneously by induction on the construction of $C$. 

If $C$ is $p$, then $\dep(C, p) = \dep_n(C, p) = \{[0]_n\}$ and $\wGL_n \vdash (A \leftrightarrow B) \to (C(A) \leftrightarrow C(B))$.

Suppose that $C$ is of the form $D_0 \to D_1$. 
We only prove clause 1, and clause 2 is proved in a similar way. 
	If $\dep_n(C, p) = \{[0]_n\}$, then for each $j= 0, 1$, $\dep_n(D_j, p) = \{[0]_n\}$ or $\dep_n(D_j, p) = \emptyset$. 
	In either case, for each $j = 0, 1$, we have
	\[
		\wGL_n \vdash (A \leftrightarrow B) \land \Box^n (A \leftrightarrow B) \to (D_j(A) \leftrightarrow D_j(B))
	\]
	by induction hypothesis. 
	Then we obtain 
	\[
		\wGL_n \vdash (A \leftrightarrow B) \land \Box^n (A \leftrightarrow B) \to (C(A) \leftrightarrow C(B)). 
	\]

Suppose that $C$ is of the form $\Box D$ and $\dep_n(C, p) = \{[i]_n\}$ for $0 \leq i < n$. 
Let $j = i - 1$ if $i \neq 0$, and let $j = n -1$ if $i = 0$. 
Then $\dep_n(D, p) = \{[j]_n\}$. 
If $j \neq 0$, by induction hypothesis, we have
	\[
		\wGL_n \vdash \Box^j (A \leftrightarrow B) \to (D(A) \leftrightarrow D(B)). 
	\]
Then 
	\[
		\wGL_n \vdash \Box^{j+1} (A \leftrightarrow B) \to (C(A) \leftrightarrow C(B)). 
	\]
If $j = 0$, by induction hypothesis, 
	\[
		\wGL_n \vdash (A \leftrightarrow B) \land \Box^n (A \leftrightarrow B) \to (D(A) \leftrightarrow D(B)). 
	\]
Then
	\[
		\wGL_n \vdash \Box (A \leftrightarrow B) \land \Box^{n+1} (A \leftrightarrow B) \to (C(A) \leftrightarrow C(B)). 
	\]
Since $\wGL_n \vdash \Box (A \leftrightarrow B) \to \Box^{n+1} (A \leftrightarrow B)$ by Proposition \ref{trans},
	\[
		\wGL_n \vdash \Box (A \leftrightarrow B) \to (C(A) \leftrightarrow C(B)). 
	\]
In either case, we have obtained the required conclusion. 
\end{proof}

From our proof of Proposition \ref{subst3}, we also obtain the following proposition. 

\begin{prop}\label{subst4}
Let $A, B$ and $C(p)$ be any modal formulas. 
If $\dep_n(C, p) = \{[0]_n\}$ and $0 \notin \dep(C, p)$, then
\[
	\wGL_n \vdash \Box^n(A \leftrightarrow B) \to (C(A) \leftrightarrow C(B)).
\]
\end{prop}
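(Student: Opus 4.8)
The plan is to run the same induction on the construction of $C$ that underlies Proposition~\ref{subst3}, exploiting the observation that in that proof the top-level conjunct $A \leftrightarrow B$ (as opposed to $\Box^n(A \leftrightarrow B)$) is used only to handle an occurrence of $p$ at modal depth $0$ --- that is, only in the atomic base case $C \equiv p$. Since the hypothesis $0 \notin \dep(C, p)$ rules out exactly such occurrences, the weaker antecedent $\Box^n(A \leftrightarrow B)$ should suffice. It is convenient to prove the mildly more general statement in which the hypothesis on $C$ reads ``$\dep_n(C, p) \subseteq \{[0]_n\}$ and $0 \notin \dep(C, p)$'', so that the degenerate case $\dep(C, p) = \emptyset$ (where $C(A) \equiv C(B) \equiv C$ and there is nothing to prove, exactly as remarked at the start of the proof of Proposition~\ref{subst3}) is subsumed. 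Throughout the induction I will freely invoke the already-proved clauses~1 and~2 of Proposition~\ref{subst3}.

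For the inductive step: the case $C \equiv p$ does not arise, since then $0 \in \dep(C, p)$; the cases where $C$ is a variable $\not\equiv p$ or $\bot$ fall under $\dep(C, p) = \emptyset$ and are already dispatched. If $C$ is $D_0 \to D_1$, then $\dep(D_j, p) \subseteq \dep(C, p)$ for $j = 0, 1$, so $0 \notin \dep(D_j, p)$ and $\dep_n(D_j, p) \subseteq \{[0]_n\}$; the induction hypothesis gives $\wGL_n \vdash \Box^n(A \leftrightarrow B) \to (D_j(A) \leftrightarrow D_j(B))$ for each $j$, and the claim for $C$ follows propositionally. If $C$ is $\Box D$, then every member of $\dep(C, p) = \{x + 1 : x \in \dep(D, p)\}$ is positive, so $0 \notin \dep(C, p)$ is automatic and imposes no constraint; meanwhile $\dep_n(C, p) = \{[0]_n\}$ forces $\dep_n(D, p) = \{[n - 1]_n\}$, and because $n \geq 2$ we have $0 < n - 1 < n$, so clause~2 of Proposition~\ref{subst3} applies to $D$ with $i = n - 1$ and yields $\wGL_n \vdash \Box^{n-1}(A \leftrightarrow B) \to (D(A) \leftrightarrow D(B))$. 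Necessitation and normality then give $\wGL_n \vdash \Box^n(A \leftrightarrow B) \to (\Box D(A) \leftrightarrow \Box D(B))$, which is the assertion for $C$.

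I do not anticipate a real obstacle: the argument is essentially a bookkeeping refinement of Proposition~\ref{subst3}, and the only subtlety worth flagging is the role of $n \geq 2$. It is precisely this hypothesis that makes $n - 1 \geq 1$, so that in the $\Box D$ step one lands in clause~2 of Proposition~\ref{subst3} (the ``$\Box^i(A \leftrightarrow B)$ with $0 < i < n$'' case) rather than in clause~1, whose use would drag the conjunct $A \leftrightarrow B$ back into the antecedent and defeat the improvement. One should also double-check that $\dep_n(D, p)$ is genuinely the singleton $\{[n-1]_n\}$ in that step, which it is: in the non-degenerate case $\dep(C, p) \neq \emptyset$ forces $\dep(D, p) \neq \emptyset$, and every element of $\dep(D, p)$ is congruent to $n - 1$ modulo $n$.
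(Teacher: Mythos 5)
Your proof is correct and follows exactly the route the paper intends: the paper derives Proposition~\ref{subst4} by inspecting the induction in Proposition~\ref{subst3} and noting that the unboxed conjunct $A \leftrightarrow B$ is needed only for depth-$0$ occurrences of $p$, which is precisely your observation, including the key point that in the $\Box D$ case one lands in clause~2 with $i = n-1 > 0$. The strengthening of the hypothesis to $\dep_n(C,p) \subseteq \{[0]_n\}$ to absorb the $\dep(C,p) = \emptyset$ subcase is the right bookkeeping and matches the remark at the start of the paper's proof of Proposition~\ref{subst3}.
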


Notice that for any modal formula $C$, $0 \notin \dep(C, p)$ if and only if $C$ is modalized in $p$.

\section{Effectively constructible fixed points in $\wGL_n$}

In this section, we prove the following main theorem of this paper. 

\begin{theorem}\label{FPT}
For any modal formula $A(p)$ which is modalized in $p$, a fixed point of $A(p)$ in $\wGL_n$ is effectively constructible.  
\end{theorem}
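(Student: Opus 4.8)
The plan is to mimic Sambin's syntactic argument for $\GL$, but replacing each use of the L\"ob rule and of the substitution principle ``$\Box(A\leftrightarrow B)\to(\Box A\leftrightarrow\Box B)$'' by their $\wGL_n$-analogues established above: Proposition~\ref{Lob}, Proposition~\ref{subst1}--\ref{subst4}, and Proposition~\ref{equiv}. First I would reduce the general case to a normal form. Writing $A(p)$ as a Boolean combination of propositional variables and boxed subformulas $\Box B_1(p),\dots,\Box B_r(p)$, one treats the boxed subformulas as fresh atoms; so it suffices to construct, for each $B_i(p)$ (which is an arbitrary formula, not necessarily modalized), a formula $H_i$ with $\At(H_i)\subseteq\At(A)\setminus\{p\}$ such that $\wGL_n\vdash \Box B_i(H)\leftrightarrow H_i$ where $H$ is the induced fixed-point candidate, and then recombine. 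Equivalently, by a further standard reduction one may assume $A(p)\equiv \Box C(p)$ for a single formula $C(p)$, and even iterate so that one is really constructing a fixed point of a formula of the form $\Box C(p)$ where $C$ has a controlled dependence on $p$. The recursion is on the number of propositional variables other than those forced to stay, exactly as in the $\GL$ case.

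The core of the argument is the base step: given $A(p)\equiv\Box C(p)$ modalized in $p$, I would show that $A(\top)$ — or more precisely a fixed point obtained by substituting a suitable truth-value-like formula, following Sambin's trick with $A^k(\top)$ — is already a fixed point. Here the relevant numerical invariant is $\dep_n(C,p)$, which is why Lemmas~\ref{lem1}--\ref{lem4} were set up: by Lemma~\ref{lem4}, iterating $A$ enough times (specifically $n$ times, or $\mathrm{lcm}$-many times if $\dep_n$ is not a singleton, after first splitting $A$ into pieces each with singleton $\dep_n$) collapses all depths to $[0]_n$, which is exactly the hypothesis needed to invoke Proposition~\ref{subst3}(1) and Proposition~\ref{subst4}. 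Concretely: set $F:=A^n(\top)$ (or the appropriate iterate); using Proposition~\ref{subst4} with $C$ playing the role of the context and $A(F),A(A(F)),\dots$ as the arguments, together with Proposition~\ref{equiv} to strip boxes modulo $\Box^n$, one proves $\wGL_n\vdash \Box^n\big(A(F)\leftrightarrow A(\top)\big)\to \big(A(A(F))\leftrightarrow A(\top)\big)$ and then climbs up to $\wGL_n\vdash A(F)\leftrightarrow F$ by an application of Proposition~\ref{Lob} to the formula $A(F)\leftrightarrow F$. The bookkeeping of exactly which $\db{n}$ or $\db{n}^+$ prefix is needed at each stage is the delicate part.

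For the general (non-singleton $\dep_n(C,p)$) case I would, as indicated, first write $C(p)$ as a substitution instance $C'(p,\dots,p)$ of a formula $C'(p_1,\dots,p_m)$ in which each variable $p_i$ occurs only at depths lying in a single residue class $[i]_n$; this is always possible by tagging each occurrence of $p$ with its depth mod $n$. One then constructs fixed points classwise and glues them using Proposition~\ref{subst1}. Since every construction step — forming $A^k$, substituting, applying the above propositions — is manifestly algorithmic, and each recursion strictly decreases the number of ``free'' variables, the resulting procedure terminates and outputs $F$ effectively, giving the theorem.

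The main obstacle I expect is the base step's interaction between the congruence-class arithmetic of $\dep_n$ and the exact modal prefixes: in $\GL$ one freely converts $\Box(X\leftrightarrow Y)$ into $\Box X\leftrightarrow\Box Y$, but in $\wGL_n$ this only works ``modulo $\Box^n$'' (Proposition~\ref{equiv}) and one must ensure, via Lemma~\ref{lem4}, that after sufficiently many iterations the context genuinely satisfies $\dep_n(\cdot,p)=\{[0]_n\}$ so that Proposition~\ref{subst3}(1)/Proposition~\ref{subst4} apply with the correct $\db{n}^+$ hypothesis; verifying that the L\"ob-like rule of Proposition~\ref{Lob} can then be discharged is where the argument will need the most care.
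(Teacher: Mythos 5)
Your reduction to the case $A(p)\equiv\Box C(p)$ is the same as the paper's (Lemmas~\ref{reduction1} and~\ref{reduction2}), and your treatment of the singleton case is essentially right: when $\dep_n(\Box A,p)=\{[i]_n\}$ the iterate $(\Box A)^n(\top)$ is indeed a fixed point, which is the content of Proposition~\ref{fp1} (for $i=0$) and Theorem~\ref{thm2} (for $i\neq 0$), though the latter already requires a nontrivial intermediate step ($(\Box A)^n(\top)\leftrightarrow(\Box A)^{2n}(\top)$) that your sketch glosses over.

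The genuine gap is in the general case, where $\dep_n(\Box A,p)$ is not a singleton. Your plan is to iterate $A$ until Lemma~\ref{lem4} collapses all depths to $[0]_n$, ``after first splitting $A$ into pieces each with singleton $\dep_n$'' and then gluing classwise via Proposition~\ref{subst1}. Neither half of this works as stated. Lemma~\ref{lem4} has the singleton hypothesis essentially built in: by Lemma~\ref{lem1}, $\dep(A^2,p)=\{x+y:x,y\in\dep(A,p)\}$, so if, say, $n=3$ and $\dep_3(A,p)=\{[1]_3,[2]_3\}$, then $\dep_3(A^2,p)=\{[0]_3,[1]_3,[2]_3\}$ --- iteration makes the set of residue classes \emph{grow}, and no iterate ever has $\dep_n=\{[0]_n\}$. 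The proposed repair by ``splitting into classes'' does not yield a construction either: tagging occurrences by depth class gives $C'(p_1,\dots,p_m)$, but the fixed-point equation is the single equation $F\leftrightarrow\Box C'(F,\dots,F)$ in one unknown; there are not $m$ separate fixed points to construct and glue, and Lemma~\ref{reduction1} does not apply because all the $p_i$ must receive the same substituent. The paper's actual mechanism for eliminating residue classes one at a time is absent from your proposal: one first passes to the $0$-instance (replacing the occurrences at depth $\equiv 0$ by $\top$, justified by Proposition~\ref{fp1} via Lemma~\ref{0instance}), and then runs a $k$-shifting $\Box A(p)$-substitution sequence (Lemma~\ref{shifting}), in which substituting $\Box A$ only for the occurrences in the currently highest residue class pushes them into already-controlled classes and strictly shrinks the range; Lemma~\ref{ss}, which shows that any such substitution sequence preserves fixed points backwards via Propositions~\ref{subst2} and~\ref{Lob}, is what makes each such step sound. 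Without this (or some equivalent device) your induction has no well-founded measure in the non-singleton case, and the argument does not go through.
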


First, we show that for a proof of Theorem \ref{FPT}, we may consider only a certain restricted case, that is, it suffices to give an effective construction of fixed points of modal formulas which are of the form $\Box A(p)$. 
This reduction procedure is due to Linsdtr\"om \cite{Lin96}. 

\begin{theorem}\label{thm1}
For any modal formula $\Box A(p)$, a fixed point of $\Box A(p)$ in $\wGL_n$ is effectively constructible. 
\end{theorem}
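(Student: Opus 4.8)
The plan is to adapt Sambin's syntactic fixed-point construction to $\wGL_n$, working with the formula $\Box A(p)$ where $A(p)$ is arbitrary (not necessarily modalized). The key idea, following Sambin and Lindström, is to look for a fixed point of $\Box A(p)$ by first producing, by induction on the construction of $A$, a ``local'' fixed point relative to the box: a formula $F$ (with $\mathsf{Var}(F) \subseteq \mathsf{Var}(A) \setminus \{p\}$) such that $\wGL_n \vdash \Box A(F) \leftrightarrow \Box F$, and then setting the actual fixed point to be $\Box A(F)$. Indeed, if $\wGL_n \vdash \Box A(F) \leftrightarrow \Box F$, then substituting $\Box F$ for $p$ into $\Box A(p)$ and using Proposition \ref{subst2} together with this equivalence gives $\wGL_n \vdash \Box A(\Box A(F)) \leftrightarrow \Box A(\Box F)$, and by Lemma \ref{lem3}-style manipulations one closes the loop to get $\wGL_n \vdash \Box A(F) \leftrightarrow \Box A(\Box A(F))$, i.e.\ $\Box A(F)$ is a fixed point of $\Box A(p)$. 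So the real work is the inductive construction of such an $F$.

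The induction on the construction of $A(p)$ would proceed along the usual Sambin cases. If $p \notin \mathsf{Var}(A)$, take $F \equiv A$ (no $p$ to eliminate). If $A$ is $p$ itself, then $\Box A(p) \equiv \Box p$, and $F \equiv \top$ works since $\wGL_n \vdash \Box \top \leftrightarrow \Box \top$ trivially — more precisely one wants a fixed point of $\Box p$, which is $\top$ since $\wGL_n \vdash \top \leftrightarrow \Box\top$ fails, so really one should reconsider: the fixed point of $\Box p$ is any theorem-equivalent; here $\Box \top$ is provable, so $F = \Box\top$ or simply one notes $\wGL_n \vdash \Box\top$, giving $\wGL_n \vdash \Box\top \leftrightarrow \Box A(\Box\top)$. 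The interesting cases are the propositional connective cases ($A \equiv B \to C$), handled by splitting according to whether $p$ occurs in $B$, in $C$, or both, and recursively building fixed points for the subformulas; and the case $A \equiv \Box B$, where one uses the fact that $\Box\Box B(p)$ already has $p$ two-deep and recursion on $B$ applies. Throughout, the algebraic bookkeeping with $\dep_n$ via Lemmas \ref{lem1}--\ref{lem4} and the refined substitution Propositions \ref{subst3}, \ref{subst4} is what lets one replace occurrences of $p$ at depth $\equiv i \pmod n$ using only $\Box^i$ of the relevant equivalence, which is exactly what $\wGL_n$ (unlike $\GL$) forces on us.

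The main obstacle, and the reason Sambin's original proof does not transfer verbatim, is that $\GL$'s fixed-point construction relies repeatedly on $\GL \vdash \Box A \leftrightarrow \Box(A \wedge \Box A)$ and on the full transitivity $\Box A \to \Box\Box A$, which fail in $\wGL_n$; the correct substitutes are the weaker $\wGL_n \vdash \Box A \to \Box^{n+1}A$ (Proposition \ref{trans}) and the Löb-like rule (Proposition \ref{Lob}) and the equivalence lemma (Proposition \ref{equiv}). Consequently, at each recursive step one must track not just which variables remain but the residue classes modulo $n$ of the depths at which $p$ occurs, and one can only collapse an equivalence $A \leftrightarrow B$ through a context $C(p)$ when the depths of $p$ in $C$ are controlled (Proposition \ref{subst3}). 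Getting the induction hypothesis strong enough to carry this depth information — likely stated as: for each $A(p)$ there is an effectively constructible $F$ with $\wGL_n \vdash \Box^i(A(F) \leftrightarrow F)$ for suitable $i$ depending on $\dep_n(A,p)$, or with $\wGL_n \vdash \db{n}^+(A(F)\leftrightarrow F) \to \dots$, and then feeding it through Proposition \ref{equiv} to get $\Box A(F) \leftrightarrow \Box F$ — is the delicate point; once the hypothesis is correctly formulated the connective and box cases should go through by routine (if tedious) applications of the propositions already established.
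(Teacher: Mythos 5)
There is a genuine gap: you have described the outline of a Sambin-style structural induction and correctly identified where the difficulty lies, but the step you defer as ``routine'' is exactly the step that fails, and the idea that resolves it is absent. Propositions \ref{subst3} and \ref{subst4} only let you push an equivalence through a context $C(p)$ when $\dep_n(C,p)$ is a \emph{singleton}; in the connective case $A \equiv B \to C$ the two subformulas generally contribute different residues, so $\dep_n(A,p)$ is no longer a singleton and no single $\Box^i(A(F)\leftrightarrow F)$ (nor any induction hypothesis of the shape you gesture at) suffices to substitute $F$ into $A$. This is precisely why a verbatim adaptation of Sambin's induction does not go through and why Sacchetti's question was open. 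Your opening reduction also contains an unjustified step: from $\wGL_n \vdash \Box A(F) \leftrightarrow \Box F$ and Proposition \ref{subst2} you obtain $\Box A(\Box A(F)) \leftrightarrow \Box A(\Box F)$, but to conclude that $\Box A(F)$ is a fixed point you would further need $\Box A(\Box F) \leftrightarrow \Box A(F)$, which does not follow from anything you have established (Lemma \ref{lem3} is a statement about depth sets, not about provable equivalence).

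The paper's proof takes a different route that your plan does not reach: it does not induct on the structure of $A$ at all, but iteratively substitutes $\Box A$ into itself. Starting from $\Box B_0 \equiv \Box A$, it alternates two operations: taking the $0$-instance (replacing all occurrences of $p$ at depth $\equiv 0 \pmod n$ by $\top$, justified through Proposition \ref{fp1} via Lemma \ref{0instance}) and running a $k$-shifting substitution sequence (replacing, stage by stage, the occurrences of $p$ at one fixed residue by $\Box A$; Lemmas \ref{ss} and \ref{shifting}). Each round strictly shrinks the set $\dep_n(\Box B_k, p)$ of residues, so after $n-1$ rounds all occurrences of $p$ sit at depth $\equiv 0 \pmod n$ and Proposition \ref{fp1} yields the fixed point $\Box B_{n-1}(\top)$. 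This homogenization of depths by controlled self-substitution is the missing key idea; without it, what you have written is a statement of the difficulty rather than a proof.
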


\begin{lem}\label{reduction1}
Suppose that Theorem \ref{thm1} holds. 
Then for any modal formulas $\Box B_1(p_1, \ldots, p_m), \ldots, \Box B_m(p_1, \ldots, p_m)$, we can effectively find modal formulas $F_1, \ldots, F_m$ such that for any $i$ ($1 \leq i \leq m$), 
\[
	\wGL_n \vdash F_i \leftrightarrow \Box B_i(F_1, \ldots, F_m).
\]
\end{lem}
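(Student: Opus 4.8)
The plan is to reduce the simultaneous fixed point problem for the $m$ formulas $\Box B_1, \dots, \Box B_m$ to $m$ successive \emph{single-variable} fixed point constructions, each of which is handled by Theorem \ref{thm1}. First I would isolate the last variable $p_m$. Viewing $\Box B_m(p_1, \dots, p_{m-1}, p_m)$ as a formula in the single variable $p_m$ with the other variables treated as parameters, Theorem \ref{thm1} produces a modal formula $G_m = G_m(p_1, \dots, p_{m-1})$ with $\At(G_m) \subseteq \At(\Box B_m) \setminus \{p_m\}$ such that $\wGL_n \vdash G_m \leftrightarrow \Box B_m(p_1, \dots, p_{m-1}, G_m)$. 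Substituting $G_m$ for $p_m$ in $\Box B_1, \dots, \Box B_{m-1}$ gives $m-1$ formulas $\Box B_i' := \Box B_i(p_1, \dots, p_{m-1}, G_m)$ in the variables $p_1, \dots, p_{m-1}$; note that each $\Box B_i'$ is still of the form $\Box(\,\cdot\,)$, so the induction hypothesis applies. Thus I would argue by induction on $m$: the base case $m = 1$ is exactly Theorem \ref{thm1}, and for $m \geq 2$ the inductive hypothesis applied to $\Box B_1', \dots, \Box B_{m-1}'$ yields effectively constructible $F_1, \dots, F_{m-1}$ with $\wGL_n \vdash F_i \leftrightarrow \Box B_i'(F_1, \dots, F_{m-1})$ for $1 \leq i \leq m-1$.

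It then remains to define $F_m$ and verify all $m$ fixed point equations. The natural choice is $F_m := G_m(F_1, \dots, F_{m-1})$. For $1 \leq i \leq m-1$ we have, by the choice of the $F_i$ and by unfolding the definition of $\Box B_i'$,
\[
	\wGL_n \vdash F_i \leftrightarrow \Box B_i'(F_1, \dots, F_{m-1}) \leftrightarrow \Box B_i(F_1, \dots, F_{m-1}, G_m(F_1, \dots, F_{m-1})),
\]
and the right-hand side is precisely $\Box B_i(F_1, \dots, F_m)$ by the definition of $F_m$. For $i = m$, substituting $F_1, \dots, F_{m-1}$ for $p_1, \dots, p_{m-1}$ into the fixed point equation $\wGL_n \vdash G_m \leftrightarrow \Box B_m(p_1, \dots, p_{m-1}, G_m)$ for $G_m$ gives
\[
	\wGL_n \vdash G_m(F_1, \dots, F_{m-1}) \leftrightarrow \Box B_m(F_1, \dots, F_{m-1}, G_m(F_1, \dots, F_{m-1})),
\]
which is exactly $\wGL_n \vdash F_m \leftrightarrow \Box B_m(F_1, \dots, F_m)$. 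Since $\wGL_n$ is closed under substitution, every step above is a legitimate derivation, and since all the constructions (Theorem \ref{thm1} and, inductively, the $(m-1)$-ary case) are effective, the whole procedure is effective.

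The only subtle point — and the step I would be most careful about — is bookkeeping of variables: I must ensure that $p_m \notin \At(G_m)$ (guaranteed by Theorem \ref{thm1}) so that the substitution of $G_m$ for $p_m$ in the $\Box B_i$ is well behaved and does not reintroduce $p_m$, and more generally that throughout the induction no $F_j$ contains any of the $p_k$, so that the final formulas genuinely solve the system. This is routine but needs to be tracked explicitly, since the correctness of the nested substitutions $\Box B_i(F_1,\dots,F_{m-1},G_m(F_1,\dots,F_{m-1}))$ depends on it. There is no genuine logical obstacle here; the lemma is a soft, purely combinatorial reduction whose entire content is already contained in Theorem \ref{thm1} together with closure of $\wGL_n$ under substitution.
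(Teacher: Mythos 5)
Your proposal is correct and follows essentially the same route as the paper: induction on $m$, using Theorem \ref{thm1} to eliminate one variable per step and closure of $\wGL_n$ under substitution to verify the resulting equations. The only difference is the order of elimination --- the paper first applies the induction hypothesis to $\Box B_1, \ldots, \Box B_m$ treating the last variable as a parameter and then invokes Theorem \ref{thm1} once at the end, whereas you first invoke Theorem \ref{thm1} on $\Box B_m$ and then apply the induction hypothesis to the substituted remaining formulas; both orders work equally well.
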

\begin{proof}
We prove by induction on $m$. 
The case of $m = 1$ is exactly Theorem \ref{thm1}. 
Suppose that the statement holds for $m$. 
Let $\Box B_1(p_1, \ldots, p_m, p_{m+1}), \ldots$, $\Box B_{m+1}(p_1, \ldots, p_m, p_{m+1})$ be any modal formulas. 
Then by induction hypothesis, we can effectively find modal formulas $F_1(p_{m+1}), \ldots, F_m(p_{m+1})$ such that for any $i$ ($1 \leq i \leq m$), 
\[
	\wGL_n \vdash F_i(p_{m+1}) \leftrightarrow \Box B_i(F_1(p_{m+1}), \ldots, F_m(p_{m+1}), p_{m+1}). 
\]
By Theorem \ref{thm1}, a fixed point $F$ of the formula $\Box B_{m+1}(F_1(q_{m+1}), \ldots, F_m(q_{m+1}), q_{m+1})$ with respect to $q_{m+1}$ can be found effectively. 
Then the modal formulas $F_1(F), \ldots, F_m(F)$ and $F$ satisfy the required condition. 
\end{proof}

\begin{lem}\label{reduction2}
Suppose that Theorem \ref{thm1} holds. 
Then Theorem \ref{FPT} holds. 
\end{lem}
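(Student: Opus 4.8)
The plan is to carry out the familiar Lindström-style reduction: trade the single fixed-point equation for the general modalized formula $A(p)$ for a finite \emph{system} of equations, each headed by a single box, and then solve that system with Lemma \ref{reduction1}.

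First I would record the structural fact that makes the reduction possible. Since $A(p)$ is modalized in $p$, every occurrence of $p$ in $A$ lies inside a subformula of the form $\Box D$; collecting the outermost such subformulas, we can effectively rewrite
\[
    A(p) \equiv C\bigl(\Box B_1(p), \ldots, \Box B_m(p)\bigr),
\]
where $\Box B_1(p), \ldots, \Box B_m(p)$ are occurrences of subformulas of $A$ and $C(q_1, \ldots, q_m)$ is a modal formula with $p \notin \At(C)$ and $\At(C) \cup \At(B_1) \cup \cdots \cup \At(B_m) \subseteq \At(A)$.

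Next, picking fresh variables $q_1, \ldots, q_m$, I would apply Lemma \ref{reduction1} to the $m$ formulas $\Box B_1(C(q_1, \ldots, q_m)), \ldots, \Box B_m(C(q_1, \ldots, q_m))$, each of which is indeed of the form $\Box(\cdots)$. This produces, effectively, modal formulas $F_1, \ldots, F_m$ with
\[
    \wGL_n \vdash F_i \leftrightarrow \Box B_i\bigl(C(F_1, \ldots, F_m)\bigr) \qquad (1 \leq i \leq m),
\]
and, tracing the variable bookkeeping through Theorem \ref{thm1}, with $\At(F_i) \subseteq \bigl(\At(C) \cup \At(B_1) \cup \cdots \cup \At(B_m)\bigr) \setminus \{p, q_1, \ldots, q_m\} \subseteq \At(A) \setminus \{p\}$.

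Finally I would put $F := C(F_1, \ldots, F_m)$. Then $p \notin \At(F)$ and $\At(F) \subseteq \At(A) \setminus \{p\}$, since $p \notin \At(C)$ and $p \notin \At(F_i)$. Moreover $F$ is a fixed point: from $\wGL_n \vdash F_i \leftrightarrow \Box B_i(F)$ for every $i$, together with replacement of provable equivalents in $\K$ (equivalently, Proposition \ref{subst1} followed by necessitation and modus ponens), we obtain
\[
    \wGL_n \vdash C(F_1, \ldots, F_m) \leftrightarrow C\bigl(\Box B_1(F), \ldots, \Box B_m(F)\bigr),
\]
that is, $\wGL_n \vdash F \leftrightarrow A(F)$. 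Since every step — the decomposition of $A$, the appeal to Lemma \ref{reduction1}, and the assembly of $F$ — is effective once Theorem \ref{thm1} is granted, Theorem \ref{FPT} follows. The only delicate point is the bookkeeping: producing the decomposition with $p \notin \At(C)$ and tracking the propositional variables of the $F_i$ through the induction in Lemma \ref{reduction1}. The logical content is entirely supplied by Lemma \ref{reduction1} and by replacement of equivalents, which is available already in $\K$, so I do not expect a genuine obstacle here.
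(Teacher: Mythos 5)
Your proposal is correct and follows essentially the same route as the paper: decompose $A(p)$ as a $p$-free context applied to boxed subformulas, solve the resulting system of $\Box$-headed equations via Lemma \ref{reduction1}, and assemble the fixed point by substituting the solutions back into the context. The only differences are cosmetic — the paper takes the outer context to be entirely $\Box$-free rather than merely $p$-free, and you spell out the variable bookkeeping and the final replacement-of-equivalents step (via Proposition \ref{subst1}) that the paper leaves implicit.
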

\begin{proof}
Let $A(p)$ be any modal formula which is modalized in $p$. 
Then there exists a modal formula $\Box C_1(p), \ldots, \Box C_m(p)$ and $B(p_1, \ldots, p_m)$ such that $B(p_1, \ldots, p_m)$ does not contain $\Box$ and $A(p) \equiv B(\Box C_1(p), \ldots, \Box C_m(p))$. 
By Lemma \ref{reduction1}, we can effectively find modal formulas $F_1, \ldots, F_m$ such that for any $i$ ($1 \leq i \leq m$), 
\[
		\wGL_n \vdash F_i \leftrightarrow \Box C_i(B(F_1, \ldots, F_m)).
\]
Then $B(F_1, \ldots, F_m)$ is a fixed point of $A(p)$ in $\wGL_n$. 
\end{proof}

In the rest of this section, we prove Theorem \ref{thm1}. 
In the case of $\GL$, $\Box A(p)$ has a simple fixed point. 

\begin{fact}[See \cite{Sam76,Smo85}]\label{factGL}
For any modal formula $\Box A(p)$, 
\[
	\GL \vdash \Box A(\top) \leftrightarrow \Box A(\Box A(\top)).
\] 
\end{fact}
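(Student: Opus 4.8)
The plan is to verify that $\Box A(\top)$ is itself a fixed point of $\Box A(p)$. Abbreviate $\theta := \Box A(\top)$; the aim is to show $\GL \vdash \theta \leftrightarrow \Box A(\theta)$.

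The one genuine ingredient needed is the substitution principle available in $\GL$: for every modal formula $C(p)$ and all $B_1, B_2$,
\[
    \GL \vdash \Box(B_1 \leftrightarrow B_2) \to (\Box C(B_1) \leftrightarrow \Box C(B_2)).
\]
This is the $\GL$-analogue of Proposition~\ref{subst2} and is well known (see \cite{Sam76,Smo85}); it follows from the elementary fact that $\K \vdash \Box^{d}(B_1 \leftrightarrow B_2) \to (\Box C(B_1) \leftrightarrow \Box C(B_2))$, where $d \geq 1$ is the modal depth of $\Box C$, together with transitivity $\GL \vdash \Box X \to \Box^{m} X$ for $m \geq 1$ (a consequence of $\GL \vdash \Box X \to \Box\Box X$). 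Instantiating $C := A$, $B_1 := \theta$ and $B_2 := \top$, and using that $\theta \leftrightarrow \top$ is propositionally equivalent to $\theta$ (so that $\GL \vdash \Box(\theta \leftrightarrow \top) \leftrightarrow \Box\theta$), I get
\[
    \GL \vdash \Box\theta \to (\Box A(\theta) \leftrightarrow \theta). \qquad (\star)
\]

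From $(\star)$ the two implications are short. For $\theta \to \Box A(\theta)$: as $\theta = \Box A(\top)$ is a boxed formula, transitivity gives $\GL \vdash \theta \to \Box\theta$, which with $(\star)$ yields $\GL \vdash \theta \to \Box A(\theta)$. For $\Box A(\theta) \to \theta$: set $\chi := \Box A(\theta) \to \theta$; by L\"ob's theorem it suffices to show $\GL \vdash \Box\chi \to \chi$. Assuming $\Box\chi$ and $\Box A(\theta)$, transitivity applied to $A(\theta)$ gives $\Box\Box A(\theta)$, and distributing $\Box\chi = \Box(\Box A(\theta) \to \theta)$ over the implication gives $\Box\theta$; then $(\star)$ together with $\Box A(\theta)$ delivers $\theta$. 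Hence $\GL \vdash \Box\chi \to \chi$, so $\GL \vdash \chi$ by L\"ob, and combining the two implications gives $\GL \vdash \Box A(\top) \leftrightarrow \Box A(\Box A(\top))$.

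I do not anticipate any real obstacle: the whole argument rests on $(\star)$, whose proof amounts to isolating the $\GL$-substitution principle and noticing that its hypothesis $\Box(\theta \leftrightarrow \top)$ collapses to $\Box\theta$, after which only a routine L\"ob argument remains. It is, however, worth observing why this quick proof has no counterpart for $\wGL_n$: there transitivity is available only in the weaker form $\wGL_n \vdash \Box X \to \Box^{n+1} X$ (Proposition~\ref{trans}), so $\Box(\theta \leftrightarrow \top)$ cannot be reduced to $\Box\theta$, the naive candidate $\Box A(\top)$ need no longer be a fixed point, and a more elaborate construction is required --- which is the business of the rest of this section.
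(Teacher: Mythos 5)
The paper does not actually prove this Fact --- it is stated with citations to \cite{Sam76,Smo85} and used as a black box --- so there is no in-paper proof to compare against; the closest thing is the paper's proof of the $\wGL_n$ analogue, Proposition \ref{fp1}. Your argument is correct and is essentially the standard de Jongh--Sambin computation, and it runs structurally parallel to the paper's proof of Proposition \ref{fp1}: your $(\star)$ plays the role of the Proposition \ref{subst4} step there, and your ``easy direction via transitivity plus hard direction via L\"ob'' is exactly what the paper packages into Proposition \ref{equiv} (which uses the L\"ob axiom where you use the L\"ob rule). Two small inaccuracies are worth fixing, though neither breaks the proof. First, the ``elementary fact'' $\K \vdash \Box^{d}(B_1 \leftrightarrow B_2) \to (\Box C(B_1) \leftrightarrow \Box C(B_2))$ is false as stated in $\K$ (take $C(p) :\equiv p \land \Box p$: the occurrence at depth $1$ is not controlled by $\Box^2(B_1 \leftrightarrow B_2)$ alone); the correct $\K$-statement needs the conjunction $\db{d}(B_1 \leftrightarrow B_2)$, which in $\GL$ does follow from the single premise $\Box(B_1 \leftrightarrow B_2)$ by transitivity, so the principle you actually use is fine. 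Second, your closing diagnosis of why the argument fails for $\wGL_n$ is slightly off: $\Box(\theta \leftrightarrow \top)$ \emph{does} collapse to $\Box\theta$ already in $\K$, since $\theta \leftrightarrow \top$ and $\theta$ are propositionally equivalent. The genuine obstruction is that the $\wGL_n$ substitution principle (Proposition \ref{subst2}) requires the full conjunction $\db{n}(B_1 \leftrightarrow B_2)$, and weak transitivity $\Box X \to \Box^{n+1}X$ only yields $\Box^{kn+1}\theta$ from $\Box\theta$, not $\Box^2\theta, \ldots, \Box^n\theta$ --- which is precisely why the paper must track depths modulo $n$ and restricts Proposition \ref{fp1} to the case $\dep_n(\Box A, p) = \{[0]_n\}$.
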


The fixed point theorem of $\GL$ immediately follows from Lemma \ref{reduction2} and Fact \ref{factGL}. 
In $\wGL_n$ for $n \geq 2$, a fixed point of $\Box A(p)$ is not so simple in general. 
However, we can prove the following proposition which is a counterpart of Fact \ref{factGL} in $\wGL_n$. 

\begin{prop}\label{fp1}
If $\dep_n(\Box A, p) = \{[0]_n\}$, then
\[
	\wGL_n \vdash \Box A(\top) \leftrightarrow \Box A(\Box A(\top)).
\]
\end{prop}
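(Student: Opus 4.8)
The plan is to mimic the proof of Fact \ref{factGL} for $\GL$ but using the Löb-like rule (Proposition \ref{Lob}) and the refined substitution lemma (Proposition \ref{subst3}) in place of their $\GL$-counterparts. The condition $\dep_n(\Box A, p) = \{[0]_n\}$ is exactly what makes Proposition \ref{subst3}(1) applicable to $C(p) \equiv \Box A(p)$, so the guiding idea is: the formula $\Box A(\top)$ is a provable consequence of $\top$-substitution, and $\top$ is itself provably equivalent to $\Box A(\top)$ under enough boxes because $p$ occurs in $\Box A$ only under boxes and the depths all vanish mod $n$.

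First I would set $G \equiv \Box A(\top)$ and apply Proposition \ref{subst3}(1) with $A := \top$, $B := G$, $C(p) := \Box A(p)$; since $\dep_n(\Box A,p)=\{[0]_n\}$ this gives
\[
	\wGL_n \vdash (\top \leftrightarrow G) \land \Box^n(\top \leftrightarrow G) \to (\Box A(\top) \leftrightarrow \Box A(G)),
\]
i.e.
\[
	\wGL_n \vdash G \land \Box^n G \to (G \leftrightarrow \Box A(G)).
\]
Next I would observe that $\Box A(p)$ is modalized in $p$, so one can push one more layer: using Proposition \ref{subst4} (with the same $C(p) \equiv \Box A(p)$, which satisfies $0 \notin \dep(C,p)$ since $C$ is modalized in $p$) we instead get
\[
	\wGL_n \vdash \Box^n(\top \leftrightarrow G) \to (\Box A(\top) \leftrightarrow \Box A(G)),
\]
that is $\wGL_n \vdash \Box^n G \to (G \leftrightarrow \Box A(G))$. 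From this, $\wGL_n \vdash \Box^n G \to (G \to \Box A(G))$ and also $\wGL_n \vdash \Box^n G \to (\Box A(G) \to G)$.

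Now I would get the two halves of the biconditional separately. For the direction $G \to \Box A(G)$: from $\wGL_n \vdash \Box^n G \to (G \to \Box A(G))$ we get $\wGL_n \vdash G \to (\Box^n G \to \Box A(G))$, box it to obtain $\wGL_n \vdash \Box G \to (\Box^{n+1} G \to \Box^2 A(G))$ — actually more directly, since $G \equiv \Box A(\top)$ is itself of the form $\Box(\cdots)$, apply Proposition \ref{trans} to get $\wGL_n \vdash G \to \Box^n G$ (as $G = \Box A(\top)$ and $\Box X \to \Box^{n+1}X$ iterated, or note $\Box X \to \Box^{n+1}X$ gives $\Box X \to \Box^n(\Box X)$... one must check the bookkeeping, but $\Box X \vdash \Box^k X$ for all $k\ge 1$ follows from Proposition \ref{trans} by induction). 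Hence $\wGL_n \vdash G \to \Box^n G$, which combined with $\wGL_n \vdash \Box^n G \to (G \to \Box A(G))$ yields $\wGL_n \vdash G \to \Box A(G)$. For the converse direction $\Box A(G) \to G$: this is where I expect the real work. We have $\wGL_n \vdash \Box^n G \to (\Box A(G) \to G)$, but we cannot simply derive $\Box^n G$. The idea is to invoke the Löb-like rule: one wants to show $\wGL_n \vdash \Box A(G) \to G$ by showing it suffices to prove $\wGL_n \vdash \db{n}H \to H$ for a suitable $H$. A natural candidate is to argue via necessitation and the $\wGL_n$ axiom. Concretely, from $\wGL_n \vdash \Box^n G \to (\Box A(G) \to G)$ one might try to show $\wGL_n \vdash \Box^n(\Box A(G) \to G) \to (\Box A(G) \to G)$ or rather massage things so that $\Box A(G)$ plays the role that lets the axiom $\Box(\Box^n p \to p) \to \Box p$ fire.

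The main obstacle, and the step I would spend the most care on, is the direction $\Box A(G) \to G$: getting $\Box^n G$ "for free" inside a box. The trick that should work: from $\wGL_n \vdash \Box^n G \to (\Box A(G) \to G)$ and the fact that $G = \Box A(\top)$ so $\Box A(G)$ and $G$ are syntactically $\Box$-formulas, rewrite as $\wGL_n \vdash \Box A(G) \to (\Box^n G \to G)$; apply necessitation and the $\K$-distribution to get $\wGL_n \vdash \Box\,\Box A(G) \to \Box(\Box^n G \to G)$, hmm but that needs $\Box\Box A(G)$ which by Proposition \ref{trans} follows from $\Box A(G)$ itself only after noting $\Box A(G) \to \Box^{n+1}A(G) \to \dots$; in any case once we have $\Box(\Box^n G \to G)$ the $\wGL_n$ axiom gives $\Box G$, then $\Box^n G$ by Proposition \ref{trans}, and feeding this back into $\wGL_n \vdash \Box^n G \to (\Box A(G) \to G)$ under the hypothesis $\Box A(G)$ closes the loop: $\wGL_n \vdash \Box A(G) \to G$. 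I would carefully verify the $\Box$-prefix bookkeeping (that $\Box A(G)$ really does imply $\Box\Box A(G)$ and all higher boxes via Proposition \ref{trans}, using that $A(G) = A(\Box A(\top))$ need not itself be boxed but $\Box A(G)$ is), and check that the equivalence $\wGL_n \vdash \Box^n(\top\leftrightarrow G) \to (\Box A(\top)\leftrightarrow \Box A(G))$ from Proposition \ref{subst4} is exactly the input I need — these are the only places an off-by-one in the modal depth could sink the argument. Combining the two directions gives $\wGL_n \vdash G \leftrightarrow \Box A(G)$, i.e. $\wGL_n \vdash \Box A(\top) \leftrightarrow \Box A(\Box A(\top))$, as desired.
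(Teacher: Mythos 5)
Your overall strategy is workable and your first direction ($G \to \Box A(G)$) is correct, but the sketch of the hard direction $\Box A(G) \to G$ contains two concrete errors in the modal bookkeeping, both stemming from the fact that $\wGL_n$ for $n \geq 2$ does \emph{not} prove $\Box X \to \Box^2 X$: Proposition \ref{trans} yields only $\Box X \to \Box^{n+1}X$ (hence $\Box X \to \Box^{1+kn}X$ by iteration), and not ``$\Box X \vdash \Box^k X$ for all $k \geq 1$'' as you assert parenthetically. Consequently (i) from the hypothesis $\Box A(G)$ you cannot reach $\Box\Box A(G)$ --- Proposition \ref{trans} gives only $\Box^{n+1}A(G)$ --- so you never actually obtain $\Box(\Box^n G \to G)$, and the axiom cannot fire in the form you describe; and (ii) even granting $\Box G$, the step ``then $\Box^n G$ by Proposition \ref{trans}'' fails, since $\Box G \equiv \Box^2 A(\top)$ yields only $\Box^{n+2}A(\top)$, not $\Box^{n+1}A(\top) \equiv \Box^n G$. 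The argument is repairable by aiming directly for $\Box^n G$ and never for $\Box G$: Proposition \ref{trans} gives $\wGL_n \vdash \Box A(G) \to \Box^{n+1}A(G) \equiv \Box^n(\Box A(G))$; necessitating your implication $\Box A(G) \to (\Box^n G \to G)$ $n$ times gives $\Box^n(\Box A(G)) \to \Box^n(\Box^n G \to G)$; and necessitating the axiom instance $\Box(\Box^n G \to G) \to \Box G$ a further $n-1$ times gives $\Box^n(\Box^n G \to G) \to \Box^n G$. Chaining these yields $\wGL_n \vdash \Box A(G) \to \Box^n G$, and feeding this back into your implication gives $\wGL_n \vdash \Box A(G) \to G$.

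The paper sidesteps all of this by applying Proposition \ref{subst4} one level down, to $C(p) :\equiv A(\Box p)$ (using Lemma \ref{lem3} to see that $\dep_n(A(\Box p), p) = \{[0]_n\}$), which yields the \emph{unboxed} equivalence $\wGL_n \vdash \Box^n A(\top) \to (A(\top) \leftrightarrow A(\Box A(\top)))$; Proposition \ref{equiv} then adds the outer box to both sides while discharging the $\Box^n A(\top)$ hypothesis. Proposition \ref{equiv} is exactly the packaged, correctly bookkept version of the L\"ob-style discharge you are attempting by hand, and your difficulty arises precisely because you applied the substitution lemma to $\Box A(p)$ rather than to $A(\Box p)$, leaving you one box too high to invoke it. (Your opening application of clause 1 of Proposition \ref{subst3} is also redundant, as you immediately supersede it with Proposition \ref{subst4}.)
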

\begin{proof}
We have $\dep_n(A(\Box p), p) = \dep_n(\Box A, p) = \{[0]_n\}$ because by Lemma \ref{lem3}, $\dep(A(\Box p), p) = \dep(\Box A, p)$. 
It is obvious that $0 \notin \dep(A(\Box p), p)$. 
Then by Proposition \ref{subst4}, 
\begin{align*}
	\wGL_n \vdash \Box^n A(\top) & \to \Box^n (\top \leftrightarrow A(\top)) \\
	& \to (A(\Box \top) \leftrightarrow A(\Box A(\top))) \\
	& \to (A(\top) \leftrightarrow A(\Box A(\top))). 
\end{align*}
By Proposition \ref{equiv}, we conclude
\[
	\wGL_n \vdash \Box A(\top) \leftrightarrow \Box A(\Box A(\top)). 
\]
\end{proof}

Related to Proposition \ref{fp1}, in the next section, we show that if $\dep_n(\Box A, p)$ is a singleton, then $\Box A$ has the fixed point $(\Box A)^n(\top)$ in $\wGL_n$ (Theorem \ref{thm2}). 
In general, $\dep_n(\Box A, p)$ is more complex than a singleton, and so Proposition \ref{fp1} and Theorem \ref{thm2} do not guarantee the existence of fixed points of such a formula $\Box A$. 
To overcome this situation, we reduce the existence of fixed-points of such complex formulas to that of formulas with the depth $\{[0]_n\}$ modulo $n$.
Our strategy is to transform a formula of the form $\Box A$ into another formula so that the fixed points are preserved. 
In the following, we introduce two types of transformation of formulas. 
The first one is to remove $[0]_n$ from $\dep_n$ (Definition \ref{Def:0instance} and Lemma \ref{0instance}).
The second one is to shift the elements of $\dep_n$ (Definition \ref{Def:shifting} and Lemma \ref{shifting}). 

We introduce the first transformation of formulas. 


\begin{defn}\label{Def:0instance}
We say that a modal formula $\Box A'(p)$ is the {\it $0$-instance of $\Box A(p)$} if $\Box A'(p)$ is obtained by replacing all occurrences of $p$ in $\Box A$ whose depths are congruent to $0$ modulo $n$ with $\top$. 
\end{defn}

\begin{lem}\label{0instance}
For any modal formulas $\Box A(p)$ and $F$, if $F$ is a fixed point of the $0$-instance of $\Box A(p)$ in $\wGL_n$, then $F$ is also a fixed point of $\Box A(p)$ in $\wGL_n$. 
\end{lem}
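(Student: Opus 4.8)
The plan is to show that a fixed point of the $0$-instance $\Box A'(p)$ is automatically a fixed point of $\Box A(p)$, by comparing $\Box A(F)$ with $\Box A'(F)$ and showing they are $\wGL_n$-equivalent whenever $F$ satisfies the fixed-point equation for $\Box A'$. The key observation is that $\Box A'(p)$ is obtained from $\Box A(p)$ by replacing exactly the occurrences of $p$ at depths $\equiv 0 \pmod n$ by $\top$, so if I write $\Box A(p)$ as $\Box A(p, \ldots, p)$ distinguishing the depth-$0$-mod-$n$ slots (call them the second argument) from the rest, then $\Box A'(p) \equiv \Box A(p, \top)$ in the notation of Lemma \ref{lem2}, and $\Box A$ viewed in the remaining ($p$-in-first-slot) positions has all its relevant $p$-depths $\not\equiv 0$.

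First I would fix notation: write $\Box A(p) \equiv \Box C(p, p, \ldots, p)$ where $C(p, q)$ is the formula in which the occurrences of $p$ destined to become $\top$ are displayed as $q$; thus $\Box A'(p) \equiv \Box C(p, \top)$ and $\dep_n(\Box C, q)$, as it sits inside $\Box A$, is $\{[0]_n\}$ (or $q$ does not occur), while the $p$-occurrences in the first argument all have depth $\not\equiv 0 \pmod n$ inside $\Box C$. By hypothesis $\wGL_n \vdash F \leftrightarrow \Box A'(F) \equiv \Box C(F, \top)$, so in particular $\wGL_n \vdash \Box F \leftrightarrow \Box \Box C(F, \top)$, and by Proposition \ref{trans} and iteration, $\wGL_n \vdash \Box^k F \leftrightarrow \Box^k \Box C(F, \top)$ for all $k \geq 1$. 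I will use this to control $\Box^j(\top \leftrightarrow F)$ for appropriate $j$.

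Next I would compute $\Box A(F) \equiv \Box C(F, F)$ and compare it with $\Box C(F, \top)$. Applying Proposition \ref{subst3} (clause 1, since the $q$-depths in $\Box C$ are $\equiv 0 \pmod n$; more precisely, I will have to peel off one box and apply the depth-$0$ case to $C$ regarded in its $q$-argument, using that $0 \notin \dep(\Box C, q)$ so that Proposition \ref{subst4} applies), I get $\wGL_n \vdash \Box^n(\top \leftrightarrow F) \to (\Box C(F, \top) \leftrightarrow \Box C(F, F))$, i.e. $\Box^n(\top \leftrightarrow F) \to (\Box A'(F) \leftrightarrow \Box A(F))$. Since $\Box^n \top$ is a theorem, this reduces to showing $\wGL_n \vdash \Box A'(F) \to \Box^n F$, equivalently $\Box A'(F) \to \Box^n \Box C(F, \top)$ modulo the equivalence $F \leftrightarrow \Box C(F, \top)$ inside boxes; and $\Box A'(F) \equiv \Box C(F, \top)$ gives $\Box \Box C(F, \top)$, hence $\Box^{n+1} \Box C(F,\top)$ by Proposition \ref{trans}, hence (using $F \leftrightarrow \Box C(F,\top)$ again under $n$ boxes via Proposition \ref{subst2}) $\Box^n F$. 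Combining, $\wGL_n \vdash \Box A'(F) \to (\Box A'(F) \leftrightarrow \Box A(F))$, and since $\wGL_n \vdash F \leftrightarrow \Box A'(F)$ we conclude $\wGL_n \vdash F \leftrightarrow \Box A(F)$; the condition $\At(F) \subseteq \At(\Box A) \setminus \{p\}$ is inherited since $\Box A$ and $\Box A'$ have the same variables other than those possibly lost by the $\top$-substitution.

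The main obstacle I expect is bookkeeping the depths correctly: one must check that replacing only the depth-$\equiv 0$ occurrences really does leave a formula $\Box C(p, q)$ whose $q$-occurrences all have depth $\equiv 0 \pmod n$ and whose displayed $p$-occurrences all have depth $\not\equiv 0 \pmod n$ inside $\Box C$, so that the two clauses of Proposition \ref{subst3} (and Proposition \ref{subst4}) apply to the right arguments; and then to route the equivalence $F \leftrightarrow \Box C(F,\top)$ through the correct number of boxes, invoking Proposition \ref{trans} each time the box-count would otherwise exceed $n$. None of this is deep, but it is the place where an off-by-one in the modular arithmetic would break the argument.
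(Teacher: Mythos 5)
Your setup is sound: isolating the depth-$\equiv 0$ occurrences as a fresh argument $q$ in $\Box C(p,q)$, noting $\dep_n(\Box C,q)=\{[0]_n\}$ and $0\notin\dep(\Box C,q)$, and applying Proposition \ref{subst4} to get $\wGL_n \vdash \Box^n(\top\leftrightarrow F)\to(\Box C(F,\top)\leftrightarrow \Box C(F,F))$ is exactly the right computation, and $\wGL_n\vdash \Box A'(F)\to\Box^n F$ is correct in substance (though you cannot pass from the hypothesis $\Box C(F,\top)$ to $\Box\Box C(F,\top)$ by necessitation; the legitimate route is Proposition \ref{trans} applied directly to $C(F,\top)$, giving $\Box C(F,\top)\to\Box^{n+1}C(F,\top)\equiv\Box^n\Box C(F,\top)\to\Box^n F$). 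The genuine gap is the final combination. From $\wGL_n\vdash \Box A'(F)\to(\Box A'(F)\leftrightarrow\Box A(F))$ and $\wGL_n\vdash F\leftrightarrow\Box A'(F)$ you obtain $\wGL_n\vdash F\to\Box A(F)$, but not the converse: propositionally, $P\to(P\leftrightarrow Q)$ together with $R\leftrightarrow P$ does not yield $R\leftrightarrow Q$ (take $P,R$ false and $Q$ true). The direction $\Box A(F)\to\Box A'(F)$ cannot be recovered by discharging the antecedent $\Box^n F$, since $\Box^n F$ was only derived from $\Box A'(F)$, which is precisely what you would need to conclude; this direction genuinely requires the L\"ob-like machinery of $\wGL_n$ and your argument never invokes it.

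The repair is to descend one box and apply Proposition \ref{equiv}, which is designed to upgrade such a one-sided hypothesis to a two-sided conclusion. The $q$-occurrences in $C(p,q)$ (outer box removed) all have depth $\equiv n-1\pmod n$, so clause 2 of Proposition \ref{subst3} gives $\wGL_n\vdash\Box^{n-1}(\top\leftrightarrow F)\to(C(F,\top)\leftrightarrow C(F,F))$; since $\Box^n C(F,\top)\equiv\Box^{n-1}\Box C(F,\top)$ implies $\Box^{n-1}F$ by the fixed-point hypothesis under $n-1$ boxes, we get $\wGL_n\vdash\Box^n C(F,\top)\to(C(F,\top)\leftrightarrow C(F,F))$, and Proposition \ref{equiv} then yields the full biconditional $\wGL_n\vdash\Box C(F,\top)\leftrightarrow\Box C(F,F)$. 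The paper takes an equivalent but tidier route: it applies Proposition \ref{fp1} to $\Box C(p,q)$ in the variable $q$ to obtain the schema $\Box C(p,\top)\leftrightarrow\Box C(p,\Box C(p,\top))$ (Proposition \ref{fp1} already contains the needed appeal to Proposition \ref{equiv}), substitutes $F$ for $p$, and then replaces $\Box C(F,\top)$ by $F$ in the $q$-slot using Proposition \ref{subst4} and the necessitated fixed-point hypothesis. Without some such appeal, your argument establishes only half of the fixed-point equation.
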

\begin{proof}
If $[0]_n \notin \dep_n(\Box A(p), p)$, then the $0$-instance of $\Box A(p)$ is $\Box A(p)$ itself, and hence the lemma is trivial. 
We may assume $[0]_n \in \dep_n(\Box A(p), p)$. 
Let $q$ be some propositional variable not contained in $\Box A(p)$, and let $\Box B(p, q)$ be the modal formula obtained by replacing all occurrences of $p$ in $\Box A(p)$ whose depths are congruent to $0$ modulo $n$ with $q$. 
Then $\Box B(p, \top)$ is the $0$-instance of $\Box A(p)$. 
Since $\dep_n(\Box B, q) = \{[0]_n\}$, by Proposition \ref{fp1}, 
\[
	\wGL_n \vdash \Box B(p, \top) \leftrightarrow \Box B(p, \Box B(p, \top)). 
\]
Let $F$ be a fixed point of $\Box B(p, \top)$. 
Then 
\begin{align*}
	\wGL_n \vdash F & \leftrightarrow \Box B(F, \top) \\
	& \leftrightarrow \Box B(F, \Box B(F, \top)) \\
	& \leftrightarrow \Box B(F, F) \\
	& \equiv \Box A(F). 
\end{align*}
This means that $F$ is also a fixed point of $\Box A(p)$. 
\end{proof}

Before introducing the second transformation of formulas, we prove that an iterative substitution of $\Box A(p)$ into $p$ in $\Box A(p)$ preserves fixed points.

\begin{defn}\label{Def:ss}
Let $\Box A(p)$ be any modal formula. 
We say a sequence $\{\Box A_i\}_{i \in \omega}$ of modal formulas is a {\it $\Box A(p)$-substitution sequence} if the following conditions hold: 
\begin{enumerate}
	\item $\Box A_0 \equiv \Box A$. 
	\item $\Box A_{i+1}$ is obtained by replacing several occurrences of $p$ in $\Box A_i$ with $\Box A$. 
\end{enumerate}
\end{defn}

\begin{lem}\label{ss}
Let $\{\Box A_i\}_{i \in \omega}$ be a $\Box A(p)$-substitution sequence. 
Then for any $i \in \omega$ and any modal formula $F$, if $F$ is a fixed point of $\Box A_i$ in $\wGL_n$, then $F$ is also a fixed point of $\Box A$ in $\wGL_n$. 
\end{lem}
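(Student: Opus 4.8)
The plan is to prove the statement by induction on $i$, reducing each step to the case of a single substitution of $\Box A$ for one occurrence of $p$, and then invoking the fixed-point property of the $0$-instance (Lemma~\ref{0instance}) together with the substitution lemmas of Section~3. For $i = 0$ there is nothing to prove since $\Box A_0 \equiv \Box A$. For the inductive step, suppose the claim holds for $\Box A_i$, and let $\Box A_{i+1}$ be obtained from $\Box A_i$ by replacing some set $S$ of occurrences of $p$ with $\Box A$. It suffices to show: if $F$ is a fixed point of $\Box A_{i+1}$, then $F$ is a fixed point of $\Box A_i$ (and hence, by the inductive hypothesis, of $\Box A$).

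For this core step I would argue as follows. Write $\Box A_i$ as $\Box C(p, \vec p)$ where $C$ is chosen so that the distinguished first argument marks exactly the occurrences in $S$; then $\Box A_{i+1} \equiv \Box C(\Box A, p)$. Since $F$ is a fixed point of $\Box A_{i+1}$, we have $\wGL_n \vdash F \leftrightarrow \Box C(\Box A, F)$. The goal is $\wGL_n \vdash F \leftrightarrow \Box C(F, F)$, i.e.\ that substituting $F$ for $\Box A$ at the marked positions is provably harmless. The key observation is that $\Box A$ and $F$ are $\wGL_n$-provably equivalent \emph{after one box}: from $F \leftrightarrow \Box C(\Box A, F)$ and the fact that $\Box C(\Box A, F) \equiv \Box A_{i+1}(F)$ is a boxed formula, we would like to derive something like $\wGL_n \vdash \Box A \leftrightarrow \Box C(F, F)$ or an equivalence under the scope of $\Box$. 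Here I expect to use Proposition~\ref{subst3} (or its variant Proposition~\ref{subst2}) applied to the context $\Box C$: since every occurrence of the first argument in $\Box C$ is modalized, an equivalence $\Box^k(\Box A \leftrightarrow F)$ for the appropriate residues suffices to conclude $\Box C(\Box A, F) \leftrightarrow \Box C(F, F)$.

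The main obstacle — and the place where the argument has to be handled with care — is that the occurrences in $S$ may lie at several different modal depths, so one needs the full depth-sensitive substitution machinery (Proposition~\ref{subst3}) rather than the naive version, and one must check that the required iterated-box equivalence $\Box^j(\Box A \leftrightarrow F)$ is actually derivable from $F \leftrightarrow \Box A_{i+1}(F)$ for the relevant $j$. The natural route is: first establish $\wGL_n \vdash \Box A \leftrightarrow F$ is \emph{not} what we have, but rather that $F$ and $\Box A$ agree modulo a box; more precisely, applying necessitation and the transitivity-like Proposition~\ref{trans} one propagates the equivalence $F \leftrightarrow \Box A_{i+1}(F)$ to all the depths at which $S$-occurrences sit. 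Once that iterated equivalence is in hand, Proposition~\ref{subst3} (in the modalized case, where $0 \notin \dep$ of the relevant argument) yields $\Box C(\Box A, F) \leftrightarrow \Box C(F, F)$, hence $F \leftrightarrow \Box C(F,F) \equiv \Box A_i(F)$, completing the induction.

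An alternative, possibly cleaner, route that I would try in parallel: prove the single-substitution step directly by showing $\wGL_n \vdash F \leftrightarrow \Box A$ fails in general, so instead one should note that replacing one occurrence of $p$ by $\Box A$ inside $\Box A_i$ and then plugging in $F$ gives $\Box A_i(F)$ with one marked occurrence changed from $F$ to $\Box A$; since that occurrence is under a box and $\wGL_n \vdash \Box^{|S|\text{-depths}}(F \leftrightarrow \Box A)$ can be bootstrapped from the fixed-point equation by iterating $\Box$ and using Proposition~\ref{trans} to fold higher boxes back down, Proposition~\ref{subst2} closes the gap. Either way, the crux is the bootstrapping of the boxed equivalence between $F$ and $\Box A$ from the single equation $F \leftrightarrow \Box A_{i+1}(F)$, and the careful bookkeeping of depths modulo $n$.
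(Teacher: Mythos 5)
There is a genuine gap, and it sits exactly at the place you yourself flag as ``the crux.'' Your plan reduces everything to the single-substitution step: from $\wGL_n \vdash F \leftrightarrow \Box C(\Box A, F)$ you want to conclude $\wGL_n \vdash F \leftrightarrow \Box C(F,F)$, and for that you need a boxed equivalence of the form $\Box^{j}(\Box A(F) \leftrightarrow F)$ to feed into Proposition~\ref{subst2}/\ref{subst3}. But $\Box A(F) \leftrightarrow F$ is precisely the conclusion of the whole lemma, and it is \emph{not} derivable from the single hypothesis $F \leftrightarrow \Box A_{i+1}(F)$ by ``iterating $\Box$ and using Proposition~\ref{trans}'': necessitation gives you $\Box^{j}(F \leftrightarrow \Box A_{i+1}(F))$, i.e.\ boxed copies of the wrong equivalence, and Proposition~\ref{trans} only moves boxes around, it does not convert $\Box A_{i+1}(F)$ into $\Box A(F)$. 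So the unconditional step ``fixed point of $\Box A_{i+1}$ $\Rightarrow$ fixed point of $\Box A_i$'' cannot be established locally; your proposal names the obstacle but supplies no mechanism to overcome it. (The appeal to Lemma~\ref{0instance} is also out of place here: that lemma concerns $0$-instances and is used alongside the present lemma in the main proof, not inside it.)

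The missing idea is the L\"ob-like rule, Proposition~\ref{Lob}. The paper does not try to prove the step-by-step reductions outright; instead it proves them all \emph{conditionally}, under the single antecedent $\db{n}(F \leftrightarrow \Box A(F))$. Writing $\Box A_{j+1}(p) \equiv \Box B_j(p, \Box A(p))$, Proposition~\ref{subst2} gives
\[
\wGL_n \vdash \db{n}(F \leftrightarrow \Box A(F)) \to (\Box A_j(F) \leftrightarrow \Box A_{j+1}(F))
\]
for every $j < i$; chaining these yields $\db{n}(F \leftrightarrow \Box A(F)) \to (\Box A(F) \leftrightarrow \Box A_i(F))$, which combined with the hypothesis $\wGL_n \vdash F \leftrightarrow \Box A_i(F)$ gives $\wGL_n \vdash \db{n}(F \leftrightarrow \Box A(F)) \to (F \leftrightarrow \Box A(F))$. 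Only now does Proposition~\ref{Lob} discharge the antecedent and deliver $\wGL_n \vdash F \leftrightarrow \Box A(F)$. This self-referential discharge is the whole content of the lemma; without invoking Proposition~\ref{Lob} (which your proposal never mentions), the bootstrapping you describe cannot be carried out.
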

\begin{proof}
Since the lemma trivially holds for $i = 0$, we may assume $i > 0$. 
Suppose that $F$ is a fixed point of $\Box A_i$ in $\wGL_n$. 

Let $j$ be any natural number with $j < i$. 
By the definition of $\Box A(p)$-substitution sequences, there exists a modal formula $\Box B_j(p, q)$ which is obtained by replacing several occurrences of $p$ in $\Box A_j$ with $q$ such that $\Box A_{j+1}(p) \equiv \Box B_j(p, \Box A(p))$. 
By applying Proposition \ref{subst2} for $\Box B_j(F, q)$, we have 
\[
	\wGL_n \vdash \db{n}(F \leftrightarrow \Box A(F)) \to (\Box B_j(F, F) \leftrightarrow \Box B_j(F, \Box A(F))). 
\]
This means
\[
	\wGL_n \vdash \db{n}(F \leftrightarrow \Box A(F)) \to (\Box A_j(F) \leftrightarrow \Box A_{j+1}(F)). 
\]
Since this statement holds for all $j < i$, we have
\[
	\wGL_n \vdash \db{n}(F \leftrightarrow \Box A(F)) \to (\Box A(F) \leftrightarrow \Box A_i(F)). 
\]
Since $\wGL_n \vdash F \leftrightarrow \Box A_i(F)$, we obtain 
\[
	\wGL_n \vdash \db{n}(F \leftrightarrow \Box A(F)) \to (F \leftrightarrow \Box A(F)). 
\]
By Corollary \ref{Lob}, we conclude
\[
	\wGL_n \vdash F \leftrightarrow \Box A(F).
\]
\end{proof}

We introduce the second transformation of formulas. 

\begin{defn}\label{Def:shifting}
Let $k$ be any natural number. 
We say a $\Box A(p)$-substitution sequence $\{\Box A_i\}_{i \in \omega}$ is {\it $k$-shifting} if for each $i$, $\Box A_{i+1}$ is obtained by replacing all occurrences of $p$ in $\Box A_i$ whose depths are congruent to $k + i$ modulo $n$ with $\Box A$. 
\end{defn}

\begin{lem}\label{shifting}
Let $k$ be any natural number with $1 \leq k < n$. 
Suppose that 
\[
	\dep_n(\Box A, p) \subseteq \{[x]_n : 1 \leq x \leq k\}
\]
and let $\{\Box A_i\}_{i \in \omega}$ be the $k$-shifting $\Box A(p)$-substitution sequence. 
Then
\[
	\dep_n(\Box A_{n - k}, p) \subseteq \{[x]_n : 0 \leq x \leq k - 1\}.
\] 
\end{lem}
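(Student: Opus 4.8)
The plan is to track the evolution of the residue set $\dep_n(\Box A_i, p)$ along the $k$-shifting substitution sequence and to maintain an invariant: at every stage the residues lie in a fixed ``low block'' $\{[0]_n, \dots, [k-1]_n\}$ together with a ``sliding window'' of $k$ consecutive residues that advances by one at each step. Precisely, I would prove by induction on $i$ that for all $i$ with $0 \le i \le n-k$,
\[
\dep_n(\Box A_i, p) \subseteq \{[0]_n, \dots, [k-1]_n\} \cup \{[k+i]_n, [k+i+1]_n, \dots, [2k+i-1]_n\}.
\]
Instantiating $i = n-k$ yields the theorem, since the window $\{[n]_n, \dots, [n+k-1]_n\}$ is then equal to $\{[0]_n, \dots, [k-1]_n\}$, so the right-hand side collapses to $\{[x]_n : 0 \le x \le k-1\}$.

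The first ingredient is a one-step recursion. Regard $\Box A_i$ as a formula $\Box C_i(p)$ in $p$, fix a fresh variable $q$, and let $\Box D_i(p,q)$ be obtained from $\Box C_i(p)$ by replacing exactly those occurrences of $p$ whose depth is congruent to $k+i$ modulo $n$ with $q$, so that $\Box A_{i+1} \equiv \Box D_i(p, \Box A)$. Then $\dep(\Box D_i, q)$ consists of the depths in $\dep(\Box A_i, p)$ that are congruent to $k+i$ modulo $n$, and $\dep(\Box D_i, p)$ of the remaining ones. Applying Lemma \ref{lem2} to $\Box D_i(p,q)$ and reducing modulo $n$, I would obtain
\[
\dep_n(\Box A_{i+1}, p) \subseteq \bigl(\dep_n(\Box A_i, p) \setminus \{[k+i]_n\}\bigr) \cup \{[k+i]_n + r : r \in \dep_n(\Box A, p)\},
\]
and this inclusion holds whether or not $[k+i]_n$ occurs in $\dep_n(\Box A_i, p)$ (if it does not, the sequence is unchanged at this step, and it is already bounded by the first set). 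Since $\dep_n(\Box A, p) \subseteq \{[1]_n, \dots, [k]_n\}$ by hypothesis, the second set is contained in $\{[k+i+1]_n, \dots, [2k+i]_n\}$.

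For the base case $i = 0$ I note that $\dep_n(\Box A_0, p) = \dep_n(\Box A, p) \subseteq \{[1]_n, \dots, [k]_n\}$, which is contained in $\{[0]_n, \dots, [k-1]_n\} \cup \{[k]_n, \dots, [2k-1]_n\}$. For the inductive step, assuming the invariant at $i$ with $i < n-k$ (hence $k \le k+i \le n-1$), the relevant elementary facts are that $[k+i]_n \notin \{[0]_n, \dots, [k-1]_n\}$ because $k-1 < k+i < n$; that the $k$ residues $[k+i]_n, \dots, [2k+i-1]_n$ are pairwise distinct because $k < n$, so that deleting $[k+i]_n$ from them leaves $\{[k+i+1]_n, \dots, [2k+i-1]_n\}$; and that $[k+i]_n$ is not among the new residues $\{[k+i+1]_n, \dots, [2k+i]_n\}$, since that would force $n \mid j$ for some $j$ with $1 \le j \le k$. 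Substituting the induction hypothesis into the recursion and combining these facts gives
\[
\dep_n(\Box A_{i+1}, p) \subseteq \{[0]_n, \dots, [k-1]_n\} \cup \{[k+i+1]_n, \dots, [2k+i]_n\},
\]
which is precisely the invariant at $i+1$.

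I expect the only delicate part to be the bookkeeping modulo $n$: one must check that the sliding window never overlaps itself after wrapping around (guaranteed by $k < n$), and that $[k+i]_n$ is genuinely removed at step $i$ and never recreated afterwards (guaranteed by $0 \notin \dep_n(\Box A, p)$, which is forced by the hypothesis). Apart from that, the argument is a routine combination of Lemma \ref{lem2} with counting modulo $n$, and I do not anticipate a genuine obstacle.
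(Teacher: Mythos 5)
Your proof is correct and follows essentially the same route as the paper's: induction on $i$, with Lemma \ref{lem2} supplying the one-step recursion for the residue sets $\dep_n(\Box A_i,p)$. The only difference is cosmetic --- your invariant uses a sliding window $\{[k+i]_n,\dots,[2k+i-1]_n\}$ of fixed width $k$ where the paper uses the slightly looser shrinking block $\{[k+i]_n,\dots,[n+k-1]_n\}$; both collapse to $\{[x]_n : 0\leq x\leq k-1\}$ at $i=n-k$.
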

\begin{proof}
We prove by induction on $i$ that for all $i$ with $k + i \leq n$, 
\[
	\dep_n(\Box A_i, p) \subseteq \{[x]_n : 0 \leq x \leq k-1\ \text{or}\ k + i \leq x \leq n + k -1\}. 
\]
For $i = 0$, this is trivial because
\[
	\{[x]_n : 0 \leq x \leq k-1\ \text{or}\ k \leq x \leq n + k -1\} = \{[x]_n : 0 \leq x \leq n-1\}. 
\]
Suppose the statement holds for $i$. 
Assume $k + i + 1 \leq n$ and let $\Box B_i(p, q)$ be the modal formula obtained by replacing all occurrences of $p$ in $\Box A_i$ whose depths are congruent to $k + i$ modulo $n$ with $q$. 
Then $\Box A_{i+1} \equiv \Box B_i(p, \Box A)$, $\dep_n(\Box B_i, q) = \{[k + i]_n\}$ and
\[
	\dep_n(\Box B_i, p) \subseteq \{[x]_n : 0 \leq x \leq k-1\ \text{or}\ k + i + 1 \leq x \leq n + k -1\}. 
\]
By Lemma \ref{lem2}, $\dep(\Box B_i(p, \Box A), p)$ is equal to
\[
	 \{x + y : x \in \dep(\Box B_i, q)\ \text{and}\ y \in \dep(\Box A, p)\} \cup \dep(\Box B_i, p). 
\]
Hence $\dep_n(\Box A_{i+1}, p)$ is equal to
\[
	 \{[x]_n + [y]_n : x \in \dep(\Box B_i, q)\ \text{and}\ y \in \dep(\Box A, p)\} \cup \dep_n(\Box B_i, p). 
\]
Since $k + i + k \leq n + k - 1$, we have
\begin{align*}
	& \{[x]_n + [y]_n : x \in \dep(\Box B_i, q)\ \text{and}\ y \in \dep(\Box A, p)\}\\
	\subseteq & \{[k + i]_n + [y]_n : 1 \leq y \leq k\}\\
	= & \{[x]_n : k + i + 1 \leq x \leq k + i + k\}\\
	\subseteq & \{[x]_n : k + i + 1 \leq x \leq n + k - 1\}. 
\end{align*}
Hence we obtain
\[
	\dep_n(\Box A_{i+1}, p) \subseteq \{[x]_n : 0 \leq x \leq k-1\ \text{or}\ k + i + 1 \leq x \leq n + k -1\}. 
\]

In particular, for $i = n - k$, 
\[
	\dep_n(\Box A_{n - k}, p) \subseteq \{[x]_n : 0 \leq x \leq k-1\ \text{or}\ n \leq x \leq n + k -1\}.
\]
Since $\{[x]_n : n \leq x \leq n + k -1\} = \{[x]_n : 0 \leq x \leq k -1\}$, we conclude
\[
	\dep_n(\Box A_{n - k}, p) \subseteq \{[x]_n : 0 \leq x \leq k-1\}.
\] 
\end{proof}

We finish our proof of Theorem \ref{thm1}. 

\begin{proof}[Proof of Theorem \ref{thm1}]
We define a sequence $\Box B_0(p), \ldots, \Box B_{n-1}(p)$ of modal formulas recursively as follows: 
\begin{enumerate}
	\item $\Box B_0(p) :\equiv \Box A(p)$. 
	\item Suppose that $\Box B_k(p)$ is already defined. 
	Let $\Box B_k'(p)$ be the $0$-instance of $\Box B_k(p)$, and let $\{\Box C_{k, i}(p)\}_{i \in \omega}$ be the $(n - k - 1)$-shifting $\Box B_k'(p)$-substitution sequence. 
Define $\Box B_{k+1}(p) : \equiv \Box C_{k, k+1}(p)$. 
\end{enumerate}

We prove that $\Box B_{n-1}(\top)$ is a fixed point of $\Box A(p)$ in $\wGL_n$. 
For this, we prove by induction on $k$ that for all $k < n$, the following two conditions hold: 
\begin{enumerate}
	\item $\dep_n(\Box B_k(p), p) \subseteq \{[x]_n : 0 \leq x \leq n - k - 1\}$. 
	\item Every fixed point of $\Box B_k(p)$ in $\wGL_n$ is also a fixed point of $\Box A(p)$ in $\wGL_n$. 
\end{enumerate}
For $k = 0$, these are trivial. 
We suppose that the two conditions hold for $k$. 
Assume $k + 1 < n$. 

1. By the definition of the $0$-instances, $\dep_n(\Box B_k'(p), p) \subseteq \{[x]_n : 1 \leq x \leq n - k - 1\}$. 
Since $\{\Box C_{k, i}(p)\}_{i \in \omega}$ is $(n - k - 1)$-shifting, 
\[
	\dep_n(\Box C_{k, n - (n - k - 1)}(p), p) \subseteq \{[x]_n : 0 \leq x \leq n - k - 2\}
\]
by Lemma \ref{shifting}. 
Since $\Box B_{k+1}(p) \equiv \Box C_{k, k+1}(p)$, we obtain
\[
	\dep_n(\Box B_{k+1}(p), p) \subseteq \{[x]_n : 0 \leq x \leq n - (k+1) - 1\}. 
\]
	
2. Let $F$ be any fixed point of $\Box B_{k+1}(p)$ in $\wGL_n$. 
Since $\{\Box C_{k, i}(p)\}_{i \in \omega}$ is a $\Box B_k'(p)$-substitution sequence, $F$ is also a fixed point of $\Box B_k'(p)$ by Lemma \ref{ss}. 
Then $F$ is a fixed point of $\Box B_k(p)$ by Lemma \ref{0instance}. 
Therefore $F$ is also a fixed point of $\Box A(p)$ by induction hypothesis. 

In particular, $\dep_n(\Box B_{n-1}(p), p) \subseteq \{[0]_n\}$. 
Then $\Box B_{n-1}(\top)$ is a fixed point of $\Box B_{n-1}(p)$ in $\wGL_n$ by Proposition \ref{fp1}. 
Therefore $\Box B_{n-1}(\top)$ is also a fixed point of $\Box A(p)$ in $\wGL_n$. 
\end{proof}

\section{Examples}

In our proof of Theorem \ref{thm1}, we gave an effective procedure for constructing fixed points in $\wGL_n$. 
More precisely, from an input $\Box A(p)$, we constructed the sequence $\Box B_0(p), \Box B_0'(p), \Box B_1(p), \Box B_1'(p), \ldots, \Box B_{n-1}(p)$ of modal formulas, and then we concluded that the modal formula $\Box B_{n-1}(\top)$ is a fixed point of $\Box A(p)$ in $\wGL_n$. 

For example, we execute this procedure for the cases $n = 2$ and $n = 3$. 

\begin{ex}[$\wGL_2$]
Let $\Box A(p)$ be any modal formula and let $\Box B(p, q)$ be the modal formula obtained by replacing all occurrences of $p$ in $\Box A(p)$ whose depths are congruent to $0$ modulo $2$ with $q$. 
\begin{enumerate}
	\item $\Box B_0(p) \equiv \Box A(p)$.
	\item $\Box B_0'(p) \equiv \Box B(p, \top)$.
	\item $\Box B_1(p) \equiv \Box B(\Box B(p, \top), \top)$. 
	\item $\Box B_1(\top) \equiv \Box B(\Box A(\top), \top)$ is a fixed point of $\Box A(p)$ in $\wGL_2$. 
\end{enumerate}
\end{ex}

The case of $\wGL_3$ is slightly complicated. 

\begin{ex}[$\wGL_3$]\label{Ex:compl}
Let $\Box A(p)$ be any modal formula and let $\Box B(p_2, p_1, p_0)$ be the modal formula obtained by replacing all occurrences of $p$ in $\Box A(p)$ whose depths are congruent to $i$ modulo $3$ with $p_i$ for every $0 \leq i \leq 2$.
\begin{enumerate}
	\item $\Box B_0(p) \equiv \Box A(p)$. 
	\item $\Box B_0'(p) \equiv \Box B(p, p, \top)$. 
	\item $\Box B_1(p) \equiv \Box B(\Box B(p, p, \top), p, \top)$.
	\item $\Box B_1'(p) \equiv \Box B(\Box B(p, \top, \top), p, \top)$.  
	\item $\Box B_2(p) \equiv \Box B_1'(\Box B_1'(\Box B_1'(p)))$. 
	\item $\Box B_2(\top) \equiv \Box B_1'(\Box B_1'(\Box B_1'(\top)))$ is a fixed point of $\Box A(p)$ in $\wGL_3$. 
\end{enumerate}
\end{ex}

As shown in this example, in general, fixed points of $\Box A(p)$ in $\wGL_n$ are complicated. 
On the other hand, Proposition \ref{fp1} states that if $\dep_n(\Box A, p) = \{[0]_n\}$, then $\Box A(p)$ has a simple fixed point $\Box A(\top)$ in $\wGL_n$. 
Moreover, we prove that if $\dep_n(\Box A, p)$ is a singleton, then $\Box A(p)$ also has a simple fixed point. 

\begin{theorem}\label{thm2}
If $\dep_n(\Box A, p) = \{[i]_n\}$ for $0 < i < n$, then
\[
	\wGL_n \vdash (\Box A)^n(\top) \leftrightarrow (\Box A)^{n+1}(\top).
\]
\end{theorem}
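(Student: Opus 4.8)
Write $G(p) :\equiv \Box A(p)$ and, for $k \geq 0$, abbreviate $D_k :\equiv ((\Box A)^{n+k}(\top) \leftrightarrow (\Box A)^{n+k+1}(\top))$; the goal is $\wGL_n \vdash D_0$. The plan rests on two ingredients. First, a ``period $n$'' identity: since $\dep_n(\Box A, p) = \{[i]_n\}$, Lemma \ref{lem4} gives $\dep_n((\Box A)^n(p), p) = \{[0]_n\}$, and $(\Box A)^n(p)$ is of the form $\Box H(p)$ with $H(p) \equiv A((\Box A)^{n-1}(p))$; hence Proposition \ref{fp1} applies to $\Box H(p)$ and, after unwinding $\Box H(\Box H(\top)) \equiv (\Box A)^{2n}(\top)$, yields $\wGL_n \vdash (\Box A)^n(\top) \leftrightarrow (\Box A)^{2n}(\top)$. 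Second, a refined substitution step: applying clause~2 of Proposition \ref{subst3} to the formula $\Box A(p)$ itself, whose depth set is $\{[i]_n\}$ with $0 < i < n$, we get $\wGL_n \vdash \Box^i(X \leftrightarrow Y) \to (\Box A(X) \leftrightarrow \Box A(Y))$ for all formulas $X, Y$; taking $X \equiv (\Box A)^{n+k}(\top)$, $Y \equiv (\Box A)^{n+k+1}(\top)$ and using $(\Box A)^{m+1}(\top) \equiv \Box A((\Box A)^m(\top))$, this becomes $\wGL_n \vdash \Box^i D_k \to D_{k+1}$.

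Next I would put $E :\equiv \bigwedge_{k=0}^{n-1} D_k$ and show $\wGL_n \vdash \Box^i E \to E$. Assuming $\Box^i E$, one obtains $\Box^i D_k$ for every $k$ since $\Box$ distributes over $\land$, hence $D_1, \dots, D_{n-1}$ by the substitution step; conjoining these yields $(\Box A)^{n+1}(\top) \leftrightarrow (\Box A)^{2n}(\top)$ by transitivity of $\leftrightarrow$, and combining this with the period-$n$ identity $(\Box A)^n(\top) \leftrightarrow (\Box A)^{2n}(\top)$ gives $D_0$ as well, so all of $E$ follows. Since $0 < i < n$, we have $\wGL_n \vdash \db{n}E \to \Box^i E$, hence $\wGL_n \vdash \db{n}E \to E$, and Proposition \ref{Lob} yields $\wGL_n \vdash E$; in particular $\wGL_n \vdash D_0$, which is the assertion of the theorem.

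The reason a more direct route fails is that neither ingredient alone suffices: Proposition \ref{fp1} delivers only a period-$n$ identity, since it cannot be applied to $(\Box A)^{n+1}$ whose depth set is again $\{[i]_n\}$, while the substitution step only propagates the equivalence from $D_k$ to $D_{k+1}$ and provides no base case. The device is to bundle the $n$ equivalences $D_0, \dots, D_{n-1}$ into the single formula $E$, so that the period-$n$ identity supplies exactly the one extra equation needed to close the cycle and derive $E$ from $\Box^i E$; the L\"ob-like rule of Proposition \ref{Lob} then turns this into an outright theorem. The bookkeeping to watch is that $\Box^i$ really is available from $\db{n}E$, which is where $0 < i < n$ is used, and that the chain $D_1, \dots, D_{n-1}$ together with the period-$n$ identity genuinely determines $D_0$, which uses $n \geq 2$.
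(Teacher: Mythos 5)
Your argument is correct, and its first half coincides with the paper's: both establish the period-$n$ identity $\wGL_n \vdash (\Box A)^n(\top) \leftrightarrow (\Box A)^{2n}(\top)$ by observing that $(\Box A)^n(p) \equiv \Box H(p)$ with $\dep_n(\Box H, p) = \{[0]_n\}$ (the paper re-derives this from Propositions \ref{subst4} and \ref{equiv} rather than quoting Proposition \ref{fp1}, but the content is identical). The second halves genuinely diverge. The paper descends from $(\Box A)^{2n}(\top)$ to $(\Box A)^{n+1}(\top)$ in one shot: it applies Proposition \ref{subst4} to the single formula $C(p) \equiv A((\Box A)^{n-1})(\Box p)$ to get $\Box^n A((\Box A)^n)(\top) \to (C(\top) \leftrightarrow C(A((\Box A)^n)(\top)))$, rewrites $C(A((\Box A)^n)(\top))$ as $A((\Box A)^{2n})(\top)$ and collapses it to $A((\Box A)^n)(\top)$ via the Claim, then boxes both sides with Proposition \ref{equiv}; no appeal to the L\"ob-like rule is needed at this stage. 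You instead bundle the $n$ successive equivalences $D_0, \ldots, D_{n-1}$ into $E$, propagate them with clause 2 of Proposition \ref{subst3} (using $\Box^i D_k \to D_{k+1}$), close the cycle with the period-$n$ identity, and discharge the hypothesis $\Box^i E$ by Proposition \ref{Lob}; the bookkeeping you flag ($\db{n}E \to \Box^i E$ needs $0 < i \leq n$, and the chain $D_1, \ldots, D_{n-1}$ plus the period identity really does yield $D_0$) checks out. The paper's route is shorter and more local; yours makes the length-$n$ cycle structure explicit and shows that one unconditional identity plus $n-1$ conditional propagation steps suffice once the L\"ob rule is available, at the cost of an extra induction-like detour through the conjunction $E$.
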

\begin{proof}
By Lemma \ref{lem4}, $\dep_n((\Box A)^n(p), p) = \{[0]_n\}$. 
Let $C(p)$ be the modal formula $ A((\Box A)^{n-1})(\Box p)$. 
Since $(\Box A)^n(p) \equiv \Box A((\Box A)^{n-1}(p)) \equiv \Box A((\Box A)^{n-1})(p)$, we obtain $\dep_n(C(p), p) = \{[0]_n\}$ by Lemma \ref{lem3}. 
Also $0 \notin \dep(C(p), p)$. 

\vspace{0.1in}
{\bf Claim.} $\wGL_n \vdash (\Box A)^n(\top) \leftrightarrow (\Box A)^{2n}(\top)$.

\begin{proof}[Proof of Claim]
Since 
\[
	\wGL_n \vdash \Box^n C(\top) \to \Box^n (\top \leftrightarrow C(\top)), 
\]
we obtain
\begin{eqnarray}\label{eq2}
	\wGL_n \vdash \Box^n C(\top) \to (C(\top) \leftrightarrow C(C(\top)))
\end{eqnarray}
by Proposition \ref{subst4}. 
Then by Proposition \ref{equiv}, we obtain 
\[
	\wGL_n \vdash \Box C(\top) \leftrightarrow \Box C(C(\top)). 
\]

Here 
\begin{align*}
	\wGL_n \vdash \Box C(\top) & \leftrightarrow \Box A((\Box A)^{n-1})(\Box \top) \\
	& \leftrightarrow (\Box A)^n(\Box \top) \\
	& \leftrightarrow (\Box A)^n(\top).
\end{align*}
Also 
\begin{align*}
	\wGL_n \vdash \Box C(C(\top)) & \leftrightarrow \Box A((\Box A)^{n-1})(\Box C(\top)) \\
	& \leftrightarrow (\Box A)^n(\Box C(\top))\\
	& \leftrightarrow (\Box A)^n((\Box A)^n(\top))\\
	& \leftrightarrow (\Box A)^{2n}(\top). 
\end{align*}
We conclude
\[
	\wGL_n \vdash(\Box A)^n(\top) \leftrightarrow (\Box A)^{2n}(\top).
\]
\end{proof}

Since
\[
	\wGL_n \vdash \Box^n A((\Box A)^n)(\top) \to \Box^n(\top \leftrightarrow A((\Box A)^n)(\top)), 
\]
we also have
\[
	\wGL_n \vdash \Box^n A((\Box A)^n)(\top) \to (C(\top) \leftrightarrow C(A((\Box A)^n)(\top)))
\]
by Proposition \ref{subst4}. 
Here 
\begin{align*}
	\wGL_n \vdash C(A((\Box A)^n)(\top)) & \leftrightarrow A((\Box A)^{n-1})(\Box A((\Box A)^n))(\top) \\
	& \leftrightarrow A((\Box A)^{2n})(\top) \\
	& \leftrightarrow A((\Box A)^n)(\top)
\end{align*}
by Claim. 
Hence
\[
	\wGL_n \vdash \Box^n A((\Box A)^n)(\top) \to (C(\top) \leftrightarrow A((\Box A)^n)(\top)). 
\]
By Proposition \ref{equiv}, 
\[
	\wGL_n \vdash \Box C(\top) \leftrightarrow \Box A((\Box A)^n)(\top). 
\]
This means
\[
	\wGL_n \vdash (\Box A)^n(\top) \leftrightarrow (\Box A)^{n+1}(\top). 
\]
\end{proof}

We close this paper with the following example showing that our fixed points given in this paper might not be simplest.  

\begin{ex}\label{Ex:efp}
Let $A(p)$ be $\Box \neg p$. 
Since $\dep_3(\Box A, p) = \{[2]_3\}$, $\Box A(\Box A(\Box A(\top)))$ is a fixed point of $\Box A(p)$ in $\wGL_3$ by Theorem \ref{thm2}. 
Moreover, we show that $\Box A(\Box A(\top))$ is also a fixed point. 
Here 
\begin{itemize}
	\item $\Box A(\Box A(\top)) \equiv \Box^2 \Diamond^2 \top$, 
	\item $\Box A(\Box A(\Box A(\top))) \equiv \Box^2 \Diamond^2 \Box^2 \bot$. 
\end{itemize}
Then it suffices to prove $\wGL_3 \vdash \Box^2 \Diamond^2 \top \leftrightarrow \Box^2 \Diamond^2 \Box^2 \bot$. 

$(\leftarrow)$: 
Since $\K \vdash \Box^2 \bot \to \top$, it is easy to derive $\K \vdash \Diamond^2 \Box^2 \bot \to \Diamond^2 \top$ and $\K \vdash \Box^2 \Diamond^2 \Box^2 \bot \to \Box^2 \Diamond^2 \top$. 

$(\to)$: Notice that for any modal formulas $B$ and $C$, $\K \vdash \Diamond^k B \land \Box^k C \to \Diamond^k(B \land C)$. 
Let $D$ be the modal formula $\Diamond^2 \top \land \Box^2 \Diamond^2 \top \land \Box^3 \Diamond^2 \top$. 
Then by Proposition \ref{trans}, 
\begin{align*}
	\wGL_3 \vdash D & \to \Diamond^2 \top \land \Box^2 \Diamond^2 \top \land \Box^3 \Diamond^2 \top \land \Box^5 \Diamond^2 \top \land \Box^6 \Diamond^2 \top\\
	& \to \Diamond^2 (\Diamond^2 \top \land \Box \Diamond^2 \top \land \Box^3 \Diamond^2 \top \land \Box^4 \Diamond^2 \top) \\
	& \to \Diamond^3 D.
\end{align*}
We have proved $\wGL_3 \vdash \Box^3 \neg D \to \neg D$. 
By Corollary \ref{Lob}, $\wGL_3 \vdash \neg D$. 
Also 
\begin{align*}
	\wGL_3 \vdash \Box^2 \Diamond^2 \top \land \Diamond^2 \Box^2 \Diamond^2 \top & \to \Box^2 \Diamond^2 \top \land \Diamond^2 \Box^2 \Diamond^2 \top \land \Box^5 \Diamond^2 \top\\
	& \to \Diamond^2 D. 
\end{align*}
Since $\wGL_3 \vdash \neg \Diamond^2 D$, we have $\wGL_3 \vdash \Box^2 \Diamond^2 \top \to \neg  \Diamond^2 \Box^2 \Diamond^2 \top$. 
This means $\wGL_3 \vdash \Box^2 \Diamond^2 \top \to \Box^2 \Diamond^2 \Box^2 \bot$. 
\end{ex}

\section{Future works}

As shown in Example \ref{Ex:compl}, our method produces slightly complex fixed points. On the other hand, Example \ref{Ex:efp} indicates that the existence of a method  producing simpler fixed points. 
We propose the following problem.

\begin{prob}
Does there exist a method to produce simpler fixed points of formulas in $\wGL_n$ than that obtained by our method?
\end{prob}

It is easily shown by a semantical argument that the intersection of all $\wGL_n$'s is exactly the logic $\K$. 
Thus there exists no minimum normal logic among the normal modal logics for which the fixed point theorem holds because the fixed point theorem does not hold for $\K$.
Relating to this observation, we propose the following problem.

\begin{prob}\label{prob2}
Is there a minimal modal logic among the normal modal logics for which the fixed point theorem holds?
\end{prob}

A negative answer to Problem \ref{prob2} follows from an affirmative answer to the following problem.
\begin{prob}
Does the fixed point theorem hold for the intersection of two logics for which the fixed point theorem holds?
\end{prob}

\bibliographystyle{plain}
\bibliography{ref}

\end{document}